\numberwithin{equation}{section}
\let\e=\varepsilon
\let\la=\lambda
\let\f=\frac
\let\om=\omega
\let\D=\Delta
\let\ep=\epsilon
\let\Om=\Omega
\let\wt=\widetilde
\let\wh=\widehat
\let\na=\nabla
\let\th=\theta
\let\pa=\partial
\let\va=\varphi
\def\no{\noindent}
\def\eqdef{\buildrel\hbox{\footnotesize def}\over =}
\def\bbT{\mathbb{T}}
\def\bbZ{\mathbb{Z}}
\def\bbR{\mathbb{R}}
\newcommand{\beq}{\begin{equation}}
\newcommand{\eeq}{\end{equation}}
\newcommand{\beqno}{\begin{equation*}}
\newcommand{\eeqno}{\end{equation*}}
\newcommand{\ben}{\begin{eqnarray}}
\newcommand{\een}{\end{eqnarray}}
\newcommand{\beno}{\begin{eqnarray*}}
\newcommand{\eeno}{\end{eqnarray*}}
\newtheorem{theorem}{Theorem}[section]
\newtheorem{lemma}[theorem]{Lemma}
\newtheorem{proposition}[theorem]{Proposition}
\newtheorem{corol}[theorem]{Corollary}
\newtheorem{remark}[theorem]{Remark}
\begin{document}
\begin{CJK*}{GBK}{song}

\title[Asymptotic stability for Boussinesq]{Asymptotic stability for two-dimensional Boussinesq systems around the Couette flow in a finite channel}
\author{Nader Masmoudi}
\address{NYUAD Research Institute, New York University Abu Dhabi,Saadiyat Island, Abu Dhabi, P.O. Box 129188, United Arab Emirates\\
Courant Institute of Mathematical Sciences, New York University, 251 Mercer Street New York, NY 10012 USA}
\email{masmoudi@cims.nyu.edu}
\author{Cuili Zhai}
\address{School of Mathematics and Physics, University of Science and Technology Beijing, 100083, Beijing, P. R. China.}
\email{zhaicuili035@126.com, cuilizhai@ustb.edu.cn}
\author{Weiren Zhao}
\address{Department of Mathematics, New York University Abu Dhabi, Saadiyat Island, P.O. Box 129188, Abu Dhabi, United Arab Emirates.}
\email{zjzjzwr@126.com, wz19@nyu.edu}
\begin{abstract}
In this paper, we study the asymptotic stability for the two-dimensional Navier-Stokes Boussinesq system around the Couette flow with small viscosity $\nu$ and small thermal diffusion $\mu$ in a finite channel. In particular, we prove that if the initial velocity and initial temperature $(v_{in},\rho_{in})$ satisfies $\|v_{in}-(y,0)\|_{H_{x,y}^2}\leq \e_0 \min\{\nu,\mu\}^{\f12}$  and $\|\rho_{in}-1\|_{H_x^{1}L_y^2}\leq \e_1 \min\{\nu,\mu\}^{\f{11}{12}}$ for some small $\e_0,\e_1$ independent of $\nu, \mu$, then for the solution of the two-dimensional Navier-Stokes Boussinesq system, the velocity remains within $O(\min\{\nu,\mu\}^{\f12})$ of the Couette flow, and approaches to Couette flow as $t\to\infty$;  the  temperature remains within $O(\min\{\nu,\mu\}^{\f{11}{12}})$ of the constant $1$, and approaches to $1$ as $t\to\infty$.
\end{abstract}
\maketitle

\section{Introduction}
In this paper, we consider the two-dimensional Navier-Stokes Boussinesq system  in a finite channel $\Om=\{(x,y): x\in\bbT, y\in(-1,1)\}$:
 \beq\label{eq: NSB}
 \left\{\begin{array}{l}
\pa_tv+v\cdot\na v-\nu\D v+\na P=-\rho ge_2\\
\pa_t\rho+v\cdot\na\rho-\mu\D \rho=0, \ \na\cdot v=0,\\
v(t,x,\pm1)=(\pm1,0),\  \rho(t,x,\pm1)=c_0,\\
v(0,x,y)=v_{in}(x,y), \ \rho(0,x,y)=\rho_{in}(x,y),
\end{array}\right.
\eeq
where $\nu$ is the viscosity coefficient and $\mu$ is the thermal diffusivity, $v(t,x,y)=(v^1,v^2)$ is the two-dimensional velocity field, $P(t,x,y)$ is the pressure, $\rho$ is the temperature, $g=1$ is the normalized gravitational constant and $e_2=(0,1)$ is the unit vector in the vertical direction. The boundary condition in \eqref{eq: NSB} means that the fluid is moving together with the boundary and the temperature is fixed at the boundary. Let us also normalize $c_0=1$ for simplicity.

The system \eqref{eq: NSB} has a flowing steady state
\beq
v_s=(y,0),\quad \rho_s=1,\quad p_s=y+c.
\eeq

Now we introduce the perturbation: $v=u+(y,0)$, $P=p+p_s$ and $\rho=\theta+\rho_s$, then $(u,p,\theta)$ satisfies
\begin{equation}\label{eq: u sys}
 \left\{\begin{array}{l}
\pa_tu+y\pa_xu+\Big(
  \begin{array}{ccc}
   u^2\\
    0\\
  \end{array}
\Big)+u\cdot\na u-\nu\D u+\na p=-  \Big(\begin{array}{ccc}
   0\\
    \theta\\
  \end{array}\Big),\\
\pa_t\theta+y\pa_x\theta+u\cdot\na\theta-\mu\D \theta=0,\ 
\na\cdot u=0,\\
u(t,x,\pm1)=0,\ \theta(t,x,\pm1)=0,\\
u(0,x,y)=u_{in}(x,y),\ \theta(0,x,y)=\theta_{in}(x,y).
\end{array}\right.
\end{equation}

We also introduce the vorticity $\om=\na \times u=\pa_yu^1-\pa_xu^2$, which satisfies
\beq\label{eq: vorticity}
 \left\{\begin{array}{l}
\pa_t\om+y\pa_x\om+u\cdot\na \om-\nu\D\om=-\pa_x\theta,\\
\pa_t\theta+y\pa_x\theta+u\cdot\na\theta-\mu\D\theta=0,\\
u=\na^{\bot}\psi=(\pa_y\psi,-\pa_x\psi),\quad \D\psi=\om.
\end{array}\right.
\eeq
Note that  we can not impose the boundary condition on the vorticity, which is the main difficulty of this paper.

Before stating our main result, let us first recall previous works about the stability of flowing steady states. The linear inviscid two-dimensional Boussinesq system with shear flows has been extensively studied starting from the works of Taylor \cite{Taylor}, Goldstein \cite{Goldstein} and Synge \cite{Synge}. We also refer to the book of Lin \cite{Lin}. 
The system \eqref{eq: u sys} is well studied in the infinite channel case $\mathbb{T}\times \mathbb{R}$. We can refer to \cite{Bian, DWZ, Zillinger1, Zillinger2, Zillinger3}. The best stability threshold result when $\nu=\mu$ is 
\begin{align}\label{eq:1.5}
\|\om_{in}\|_{H^s}\leq \ep\nu^{\f12}, \ \|\theta_{in}\|_{H^s}\leq\ep\nu,\ \||D_x|^{\f13}\theta_{in}\|_{H^s}\leq\ep\nu^{\f56}, 
\end{align}
with $s>1$, which was proved by Deng, Wu and Zhang \cite{DWZ}.  The mechanisms leading to stability are the so-called inviscid damping and enhanced dissipation, which are well studied for the Navier-Stokes system around Couette flow which we will introduce later. 
Without thermal diffusion, Masmoudi, Said-Houari and Zhao \cite{MHZ} considered the Navier-Stokes Boussinesq system with no heat diffusion in the thermal equation,  and they studied the stability of Couette flow for the initial data perturbation in Gevrey-$\f1s$ for $\f13<s\leq1$ in the domain $\bbT\times\bbR$. For the Euler Boussinesq system $\nu=\mu=0$, the global well-posedness for large data is an open problem. In \cite{YL}, Yang and Lin proved the linear inviscid damping for the linearized two-dimensional Euler Boussinesq system which is generalized in \cite{Bianchini}. The nonlinear inviscid damping for large time is studied by Bedrossian, Bianchini, Coti Zelati and Dolce \cite{Bedrossian}.

 In this paper, 
 we mainly study the boundary effect due to the non-slip boundary condition on the velocity.


Our main result is stated as follows.
\begin{theorem}\label{main thm}
Suppose that $(u,\theta)$ solves the system \eqref{eq: u sys} with the initial data $(u_{in}, \theta_{in})$. Then there exist constants $\nu_0$ and $\e_0,\e_1, C>0$ independent of  $\nu,\mu$ so that if 
\begin{align*}
\|u_{in}\|_{H^2}\leq \e_0\min\{\nu,\mu\}^{\f12},
\end{align*}
\begin{align*}
 \|\theta_{in}\|_{H^1}+ \||D_x|^{\f16}\theta_{in}\|_{H^1}\leq \e_1\min\{\nu,\mu\}^{\f{11}{12}}, 
\end{align*}
for some sufficiently small $\e_0,\e_1$, $0<\min\{\nu,\mu\}\leq\nu_0$, then the solution $(u,\theta)$  is global in time and satisfies the following stability estimates:
\begin{align*}
&\|(1-|y|)^{\f12}\om\|_{\wt{L}^{\infty}_t\mathcal{F}L^1L^2_y}+\|\pa_xu\|_{\wt{L}_t^2\mathcal{F}L^1L_y^2}+\||D_x|^{\f12}u\|_{\wt{L}_t^{\infty}\mathcal{F}L^1L_y^{\infty}}+\nu^{\f14}\||D_x|^{\f12}\om\|_{\wt{L}_t^2\mathcal{F}L^1L_y^2}\\
&\quad \leq C\e_0\min\{\nu,\mu\}^{\f12},
\end{align*}
and
\begin{align*}
\|\theta\|_{\wt{L}_t^{\infty}\mathcal{F}L^1L_y^2}
+\||D_x|^{\f16}\theta\|_{\wt{L}_t^{\infty}\mathcal{F}L^1L_y^2}+\mu^{\f16}\||D_x|^{\f23}\theta\|_{\wt{L}_t^2\mathcal{F}L^1L_y^2}\leq C\e_1\min\{\nu,\mu\}^{\f{11}{12}},
\end{align*}
where $\|f\|_{\wt{L}_t^p\mathcal{F}L^1L_y^q}=\sum\limits_{k\in \mathbb{Z}}\|\wh{f}_k\|_{L_t^pL_y^q}$ and $\wh{f}_k=\f{1}{2\pi}\int_{\mathbb{T}}f(x)e^{-ikx}dx$ is the Fourier transform of $f$ in the $x$ direction and $k$ is the wave number. 
\end{theorem}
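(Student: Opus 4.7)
The plan is to establish the claimed bounds by a bootstrap (continuity) argument: let $T^*$ be the maximal time on which, say, doubled versions of the stated inequalities hold, and aim to close them with half the prefactor, forcing $T^* = \infty$. Working with the vorticity-temperature formulation \eqref{eq: vorticity} and Fourier-decomposing in $x$, I would split into the $k=0$ zero mode (handled essentially by energy estimates combined with dissipation) and the interacting non-zero modes, where the mechanisms of inviscid damping and enhanced dissipation must be exploited. The use of the $\mathcal{F}L^1_x L^2_y$ norms is crucial because the algebra structure $\widehat{fg}_k = \sum_j \hat{f}_{k-j}\hat{g}_j$ converts nonlinear estimates into convolution estimates that behave like $L^1$-products, avoiding logarithmic losses in the summation.

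The first main task is the linear analysis of the passive equation
\begin{equation*}
\partial_t \theta + y \partial_x \theta - \mu \Delta \theta = 0, \qquad \theta|_{y=\pm 1} = 0,
\end{equation*}
which, by resolvent/semigroup estimates for the Dirichlet Laplacian twisted by $y\partial_x$, delivers enhanced dissipation at rate $\mu^{1/3}|k|^{2/3}$, explaining the gain $\mu^{1/6}\||D_x|^{2/3}\theta\|_{L^2_t}$. The second main task is the corresponding estimate for the forced vorticity equation
\begin{equation*}
\partial_t \om + y \partial_x \om - \nu \Delta \om = -\partial_x \theta - u\cdot\nabla\om,
\end{equation*}
coupled to the Biot-Savart law $\Delta \psi = \om$ with $\psi|_{y=\pm 1} = \partial_y\psi|_{y=\pm 1} = 0$. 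Here lies the main obstacle emphasized by the authors: the absence of a boundary condition on $\om$, so one cannot directly apply the Dirichlet heat semigroup. I would handle this by decomposing $\om$ into a ``bulk'' part (satisfying the favorable Dirichlet condition) and a ``boundary corrector'' constructed to enforce the no-slip condition on $u$; the latter is essentially a boundary-layer profile of thickness $O(\nu^{1/3}|k|^{-1/3})$ produced via an Orr-Sommerfeld resolvent analysis. The weight $(1-|y|)^{1/2}$ in the top-line norm is tailored precisely so that the boundary-layer contribution is absorbed.

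Third, I would transfer the linear estimates to the nonlinear problem. The transport nonlinearities $u\cdot\nabla\om$ and $u\cdot\nabla\theta$ are estimated via the convolution bound $\|\widehat{fg}\|_{\ell^1 L^2} \leq \|\hat f\|_{\ell^1 L^\infty}\|\hat g\|_{\ell^1 L^2}$, combined with inviscid damping $\|\partial_x u\|_{\widetilde{L}^2_t \mathcal{F}L^1 L^2_y} \lesssim \e_0 \min\{\nu,\mu\}^{1/2}$ from the bootstrap assumption, which serves as the time-integrable multiplier that recovers one derivative. The genuinely two-way coupling $-\partial_x\theta$ in the vorticity equation is the origin of the unusual thresholds: the $\mu^{1/3}$ enhanced dissipation of $\theta$ combined with an extra fractional regularity $|D_x|^{1/6}$ provides enough integrability in time on $\partial_x\theta$ to yield a source term of size $\e_1\mu^{11/12}\cdot \mu^{-5/12} \sim \e_1 \nu^{1/2}$ in the vorticity energy budget, matching the velocity scale. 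This dictates the $11/12$ exponent and the $|D_x|^{1/6}$ regularity in the hypothesis on $\theta_{in}$.

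Finally, the closing step combines the linear decay estimates with the nonlinear contributions, showing each bootstrap norm improves by a factor of $1/2$ provided $\e_0,\e_1$ are taken small independently of $\nu,\mu$. I expect the hardest single step to be the construction and control of the boundary corrector for $\om$: one must ensure that the corrector does not destroy either the inviscid damping rate of $u$ (measured by $\|\partial_x u\|_{\widetilde{L}^2_t \mathcal{F}L^1 L^2_y}$) or the $\nu^{1/4}|D_x|^{1/2}$ enhanced dissipation gain, and that its contribution to the nonlinear convolution estimates behaves like the bulk part when paired with the degenerate weight $(1-|y|)^{1/2}$. Everything else is a careful, but by now standard, orchestration of inviscid-damping and enhanced-dissipation estimates in the Fourier-$\ell^1$ framework.
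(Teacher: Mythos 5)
Your proposal follows essentially the same route as the paper: a Fourier-in-$x$ bootstrap in $\ell^1_k$-type space-time norms, linear resolvent/semigroup estimates giving the $\mu^{1/3}|k|^{2/3}$ enhanced dissipation for $\theta$ and the weighted no-slip estimate for $\om$, convolution bounds for the nonlinearities driven by the inviscid-damping norm, and the same $\nu^{-1/4}\mu^{-1/6}$ bookkeeping for the buoyancy term that forces the $11/12$ exponent and the $|D_x|^{1/6}$ hypothesis. The step you single out as hardest --- constructing and controlling the boundary corrector for $\om$ under the degenerate weight $(1-|y|)^{1/2}$ --- is precisely what the paper imports as a black box (Proposition 6.1 of the cited finite-channel Navier--Stokes work), so your outline does not genuinely diverge from the paper's argument.
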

\begin{remark}
The function space $\wt{L}_t^p\mathcal{F}L^1L_y^q$ is of the same spirit as the Chemin-Lerner's Besov space  \cite{CL}. 
\end{remark}
\begin{remark}
The asymptotic stability holds for the initial perturbation satisfying
\begin{align*}
\sum_{k\in\bbZ}\|\wh{w}_{in,k}\|_{L^2}+\sum_{k\in\bbZ\setminus\{0\}}|k|^{-1}\|\pa_y\wh{w}_{in,k}\|_{L^2}\leq C\e_0\min\{\nu,\mu\}^{\f12},
\end{align*}
and 
\begin{align*}
\|\wh{\theta}_{in,0}\|_{L^2}+\sum_{k\in\bbZ\setminus\{0\}}\||k|^{\f16}\wh{\theta}_{in,k}\|_{L^2}\leq C\e_1\min\{\nu,\mu\}^{\f{11}{12}}.
\end{align*}
\end{remark}
\begin{remark}
The estimate $\|\pa_xu\|_{\wt{L}_t^2\mathcal{F}L^1L_y^2}$ is due to the inviscid damping and the estimates $\nu^{\f14}\||D_x|^{\f12}\om\|_{\wt{L}_t^2\mathcal{F}L^1L_y^2}$ and $\mu^{\f16}\||D_x|^{\f23}\theta\|_{\wt{L}_t^2\mathcal{F}L^1L_y^2}$ are due to the enhanced dissipation.
\end{remark}

\begin{remark}
Compared to \cite{DWZ}, when $\nu=\mu$, the interpolation of Sobolev spaces gives that the stability threshold is actually more restrictive than the one in our paper. In \cite{DWZ}, an extra smallness on lower frequencies is required, namely $\|\th_{in}\|_{H^s}\leq\ep \nu$. The key point of improvement is that we are able to control the buoyancy term and nonlocal terms in the temperature equation by avoiding discussing the different sizes of $\theta$ in different frequencies.
\end{remark}
\begin{remark}
If $\th_{in}=0$, $\nu=\mu$, Theorem \ref{main thm} is consistent with the Navier-Stokes result in \cite{CLWZ2020}. We also remark that the stability problem of two-dimensional Couette flow has previously been investigated. One may refer to  \cite{Bedrossian2016, Bedrossian2018, MZ2019, MZ CPDE} for infinite channel case, and to \cite{Bedrossian2019, CLWZ2020} for finite channel case.  In this paper, the linear estimates of the velocity and the vorticity can be obtained by the same method as \cite{CLWZ2020}, and in order to shorten this paper, we will use some linear estimates from \cite{CLWZ2020} as a black box. 
\end{remark}

\begin{remark}
For the Navier-Stokes result, the restriction on the size of perturbations for the asymptotic stability is $\nu^{\f12}$ which was obtained in \cite{CLWZ2020} due to the boundary effect. Without boundary, it is expected that the stability threshold is $\nu^{\f13}$ for perturbations in some higher regularity Sobolev spaces \cite{MZ2019}.  By modifying the time-dependent multiplier of \cite{MZ2019} and treating the bouyancy term as in this paper, one can obtain that for the system \eqref{eq: vorticity} in $\bbT\times\bbR$, the asymptotic stability  holds for larger initial perturbations, namely,
\begin{align*}
\|\om_{in}\|_{H^s}\leq \e_0\min\{\nu,\mu\}^{\f13},
\end{align*}
\begin{align*}
 \|\theta_{in}\|_{H^s}+ \||D_x|^{\f16}\theta_{in}\|_{H^s}\leq \e_1\min\{\nu,\mu\}^{\f{2}{3}},
\end{align*}
 with some $s$ large.
\end{remark}

In order to control the buoyancy term, in section \ref{sec: space-time esti}, we obtain the precise estimates of $\theta$ by decomposing the system of $\theta$ into inhomogeneous problem and homogeneous problem.  For the homogeneous part, we can obtain the sharp bound by using the Gearhart-Pr\"uss lemma in \cite{Wei}. And for the inhomogeneous part, we obtain Proposition \ref{prop: theta I} by some resolvent estimates which were obtained in {\it section 3 of \cite{CLWZ2020}} with the Navier-slip boundary condition. Finally, in section \ref{sec: nonlinear}, we will mainly give the proof of the nonlinear stability.

\section{Space-time estimates of the linearized Boussinesq equations}\label{sec: space-time esti}
In this section, we establish the space-time estimates of the linearized two-dimensional Boussinesq equation.
By taking the Fourier transform in $x\in\bbT$, we have
\begin{align*}
\theta(t,x,y)&=\sum_{k\in\mathbb{Z}}\wh{\theta}_k(t,y)e^{ikx},\
\om(t,x,y)=\sum_{k\in\mathbb{Z}}\wh{w}_k(t,y)e^{ikx},\ u(t,x,y)=\sum_{k\in\mathbb{Z}}\wh{u}_k(t,y)e^{ikx}.
\end{align*}
And for convenience,  we suppress the index $k$ in $\wh{\theta}_k, \wh{w}_k, \wh{u}_k$.

\subsection{Space-time estimates for the vorticity}
Let us first study the following system for $k\neq 0$:
\begin{align}\label{eq: om general}
\left\{\begin{aligned}
&\pa_t\wh{w}+\nu(\pa_y^2-k^2)\wh{w}+iky\wh{w}=-ikf^1-\pa_yf^2,\quad w|_{t=0}=\wh{w}_{in}(k,y),\\
&\wh{w}=\pa_y\wh{u}^1-ik\wh{u}^2,\quad \wh{u}(t,k,\pm1)=0.
\end{aligned}\right.
\end{align}

We also introduce the space-time norm:
\begin{align*}
\|f\|_{L^pL^q}=\left\|\|f(t)\|_{L^q(-1,1)}\right\|_{L^p(\bbR^+)}.
\end{align*}

Let us introduce the following estimate for \eqref{eq: om general}.
\begin{proposition}\label{prop: w} (Proposition 6.1 in \cite{CLWZ2020}.)
Let $0<\nu\leq\nu_0$ and $\wh{w}$ be a solution of $\eqref{eq: om general}$ with $\wh{w}_{in}\in H^1(-1,1)$ and $f^1,f^2\in L^2L^2$, where $\wh{w}_{in}$ satisfies $\langle \wh{w}_{in},e^{\pm ky}\rangle=0$. Then there exists a constant $C>0$ independent of $\nu,k$ so that
\begin{align*}
&|k|\|\wh{u}\|^2_{L^{\infty}L^{\infty}}+k^2\|\wh{u}\|^2_{L^2L^2}+(\nu k^2)^{\f12}\|\wh{w}\|^2_{L^2L^2}+\|(1-|y|)^{\f12}\wh{w}\|^2_{L^{\infty}L^2}\\
&\quad\leq C\big(\|\wh{w}_{in}\|^2_{L^2}+k^{-2}\|\pa_y\wh{w}_{in}\|^2_{L^2}\big)+C\big(\nu^{-\f12}|k|\|f^1\|^2_{L^2L^2}+\nu^{-1}\|f^2\|^2_{L^2L^2}\big).
\end{align*}
\end{proposition}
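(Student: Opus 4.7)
The plan is to follow the resolvent-based strategy of \cite{CLWZ2020}, where the time-dependent problem is reduced to a uniform family of resolvent estimates for the Orr--Sommerfeld operator and then converted to space-time bounds via Plancherel. Fixing $k\neq 0$, I would extend $\wh{w}$ by zero for $t<0$ and take the Fourier transform in $t$, writing $W(\la,y)=\int_0^\infty e^{-i\la t}\wh{w}(t,y)\,dt$ for $\la\in\bbR$. Setting $c=\la/k$ and denoting by $F^1,F^2$ the corresponding transforms of $f^1,f^2$, the function $W$ satisfies the resolvent equation
\beqno
-\nu(\pa_y^2-k^2)W+ik(y-c)W=\wh{w}_{in}-ikF^1-\pa_yF^2,
\eeqno
with associated stream function $\Phi$ defined by $(\pa_y^2-k^2)\Phi=W$ and $\Phi(\pm 1)=\pa_y\Phi(\pm 1)=0$, the latter conditions coming from the non-slip condition on $\wh{u}$. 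The compatibility condition $\langle \wh{w}_{in},e^{\pm ky}\rangle=0$ is exactly what forces $\wh{u}_{in}(\pm 1)=0$, so the boundary conditions propagate consistently from the initial data.

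Next, I would invoke the three families of resolvent bounds developed in Section 3 of \cite{CLWZ2020}, each uniform in $c\in\bbR$ and in $\nu,k$. The first is an inviscid-damping bound controlling $k^2\|\Phi\|_{L^2_y}^2$ by the expected combination $\nu^{-1/2}|k|\|F^1\|_{L^2_y}^2+\nu^{-1}\|F^2\|_{L^2_y}^2$ plus a data contribution. The second is an enhanced-dissipation bound controlling $(\nu k^2)^{1/2}\|W\|_{L^2_y}^2$ by the same right-hand side, reflecting the $(\nu|k|^2)^{1/3}$-scale gain characteristic of the Couette flow. The third is a weighted estimate controlling $\|(1-|y|)^{1/2}W\|_{L^2_y}^2$, where the degenerate weight encodes the boundary layer of thickness $(\nu/|k|)^{1/3}$ that necessarily forms in the vorticity because the non-slip condition is imposed on $\wh{u}$, not on $\wh{w}$. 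Each bound is obtained by testing the resolvent equation against a carefully chosen multiplier and handling boundary contributions via Hardy-type inequalities near $y=\pm 1$.

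Finally, I would assemble the pieces. Integrating the first two resolvent bounds in $\la\in\bbR$ and applying Plancherel in $t$ converts them into the claimed $L^2L^2$ bounds on $k\wh{u}$ (using $\wh{u}=\na^\perp\Phi$) and on $(\nu k^2)^{1/4}\wh{w}$. For the $L^\infty_t$-type bounds, I would return to the time-domain equation and run an energy estimate on $\|(1-|y|)^{1/2}\wh{w}(t)\|_{L^2_y}^2$, closing it via the space-time bounds on the forcing already obtained; the $L^\infty_t L^\infty_y$ bound on $|k|^{1/2}\wh{u}$ then follows from the Biot--Savart representation through the Green function of $\pa_y^2-k^2$ with Dirichlet data, together with Sobolev embedding in $y$.

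The main obstacle is the weighted bound carrying the $(1-|y|)^{1/2}$ factor. Since $\wh{w}$ itself carries no boundary condition, a genuine boundary layer in the vorticity is unavoidable, and controlling it uniformly in $\nu$ requires a delicate localisation at the $(\nu/|k|)^{1/3}$ scale together with matching to an essentially inviscid interior resolvent bound. This is the technical core borrowed as a black box from \cite{CLWZ2020}; by contrast, the inviscid damping and enhanced dissipation parts follow a relatively standard hypocoercivity/multiplier argument once the resolvent framework is in place.
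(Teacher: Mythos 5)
This proposition is not proved in the paper at all: it is imported verbatim as a black box (Proposition 6.1 of \cite{CLWZ2020}), and the only in-paper model for the argument is the analogous treatment of $\theta$ in Section 2 (the splitting $\wh{\theta}=\wh{\theta}_I+\wh{\theta}_H$, Lemma \ref{lem: homo part} and Proposition \ref{prop: theta I}). Measured against that template, your outline captures the right overall philosophy (resolvent estimates plus Plancherel for the forced part, with the non-slip boundary layer as the technical core), but it has one genuine structural flaw. You place $\wh{w}_{in}$ on the right-hand side of the resolvent equation and then propose to ``integrate the resolvent bounds in $\la\in\bbR$ and apply Plancherel.'' This step fails: $\wh{w}_{in}$ is independent of $\la$, so $\int_{\bbR}\|\wh{w}_{in}\|_{L^2_y}^2\,d\la=\infty$, and no $L^2_\la$ bound can be extracted from that term. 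The correct route --- the one used both in \cite{CLWZ2020} and for $\wh{\theta}$ here --- is to decompose the solution into an inhomogeneous part with \emph{zero} initial data (handled exactly as you describe, by resolvent estimates and Plancherel) and a homogeneous initial-value part with zero forcing, which must be estimated by a separate semigroup/energy argument (a sharp Gearhart--Pr\"uss bound in the spirit of \cite{Wei}, combined with boundary-corrector constructions). That separate homogeneous analysis is precisely where the two ingredients your sketch leaves unexplained actually enter: the extra $k^{-2}\|\pa_y\wh{w}_{in}\|_{L^2}^2$ on the right-hand side, and the orthogonality condition $\langle\wh{w}_{in},e^{\pm ky}\rangle=0$ (which, as you correctly note, encodes $\wh{u}_{in}(\pm1)=0$, but which is \emph{used} in controlling the homogeneous evolution, not merely as a compatibility remark).

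A secondary caution: your plan to obtain $\|(1-|y|)^{\f12}\wh{w}\|_{L^\infty L^2}$ by ``running an energy estimate on the weighted norm and closing it'' glosses over a real obstruction. Testing the equation against $(1-|y|)\overline{\wh{w}}$ produces, after integration by parts in the viscous term, the boundary contribution $\f{\nu}{2}\big(|\wh{w}(t,1)|^2+|\wh{w}(t,-1)|^2\big)$ with the unfavorable sign; since the vorticity has no boundary condition and develops a boundary layer of width $(\nu/|k|)^{1/3}$, its trace is large, and controlling $\nu\int_0^\infty|\wh{w}(t,\pm1)|^2\,dt$ requires the boundary-trace resolvent estimates of \cite{CLWZ2020} rather than a self-contained Gronwall argument. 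You do flag the weighted bound as the technical core to be borrowed, so this is less a wrong step than an underestimate of what the black box must supply; but the Plancherel issue in the first paragraph is a step that, as written, would not go through.
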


\subsection{Space-time estimates for $\theta$}
First of all, we consider the linearized equation:
\begin{align}\label{eq: theta}
\pa_t\wh{\theta}-\mu(\pa_y^2-k^2)\wh{\theta}+iky\wh{\theta}=-ikg^1-\pa_yg^2, \ \wh{\theta}|_{t=0}=\wh{\theta}_{in},\ \wh{\theta}|_{y=\pm1}=0.
\end{align}

By the standard energy estimates for $\wh{\theta}$, we can easily get the following proposition, which is important for the estimates of high frequency of $\wh{\theta}$.
\begin{proposition}\label{prop: theta energy esti}
Let $\theta$ be a solution of \eqref{eq: theta} with $\wh{\theta}_{in}\in L^2(-1,1)$ and $g^1,g^2\in L^2L^2$. Then there exists a constant $C>0$ independent in $\mu,k$ so that 
\begin{align*}
\|\wh{\theta}\|^2_{L^{\infty}L^2}+\mu k^2\|\wh{\theta}\|^2_{L^2L^2}+\mu\|\pa_y\wh{\theta}\|^2_{L^2L^2}\leq C\mu^{-1}\big(\|g^1\|^2_{L^2L^2}+\|g^2\|^2_{L^2L^2}\big)+\|\wh{\theta}_{in}\|^2_{L^2}.
\end{align*}
\end{proposition}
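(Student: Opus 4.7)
The plan is to run a standard $L^2$ energy estimate directly on the equation $\eqref{eq: theta}$, since the statement has the familiar shape of a parabolic energy inequality with viscous dissipation $\mu\|\pa_y\wh{\theta}\|_{L^2L^2}^2+\mu k^2\|\wh{\theta}\|_{L^2L^2}^2$ on the left and a $\mu^{-1}$-loss on the forcing on the right. First, I would take the inner product of the equation with $\overline{\wh{\theta}}$ over $y\in(-1,1)$ and take real parts. Because $\wh{\theta}$ vanishes at $y=\pm1$, integration by parts on the Laplacian term contributes no boundary terms and yields $\mu\|\pa_y\wh{\theta}\|_{L^2}^2+\mu k^2\|\wh{\theta}\|_{L^2}^2$. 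The transport term $iky\wh{\theta}$ tested against $\overline{\wh{\theta}}$ is purely imaginary and so drops out after taking real parts.

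Next, for the forcing, I would bound $\mathrm{Re}\langle -ikg^1,\wh{\theta}\rangle$ by $|k|\|g^1\|_{L^2}\|\wh{\theta}\|_{L^2}$ and, after integrating $\pa_y$ by parts (again no boundary contribution since $\wh{\theta}(\pm 1)=0$), bound $\mathrm{Re}\langle g^2,\pa_y\wh{\theta}\rangle$ by $\|g^2\|_{L^2}\|\pa_y\wh{\theta}\|_{L^2}$. Two applications of Young's inequality in the sharp forms
\begin{equation*}
|k|\|g^1\|_{L^2}\|\wh{\theta}\|_{L^2}\leq \tfrac{\mu k^2}{2}\|\wh{\theta}\|_{L^2}^2+\tfrac{1}{2\mu}\|g^1\|_{L^2}^2,\qquad \|g^2\|_{L^2}\|\pa_y\wh{\theta}\|_{L^2}\leq \tfrac{\mu}{2}\|\pa_y\wh{\theta}\|_{L^2}^2+\tfrac{1}{2\mu}\|g^2\|_{L^2}^2
\end{equation*}
let me absorb half of the dissipation into the left-hand side, giving a pointwise-in-$t$ differential inequality
\begin{equation*}
\tfrac{1}{2}\tfrac{d}{dt}\|\wh{\theta}\|_{L^2}^2+\tfrac{\mu}{2}\|\pa_y\wh{\theta}\|_{L^2}^2+\tfrac{\mu k^2}{2}\|\wh{\theta}\|_{L^2}^2\leq \tfrac{1}{2\mu}\bigl(\|g^1\|_{L^2}^2+\|g^2\|_{L^2}^2\bigr).
\end{equation*}

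Finally, integrating in $t$ from $0$ to any $T>0$ and then taking the supremum over $T$ yields all three quantities on the left with a constant independent of $k$ and $\mu$, producing exactly the stated bound. There is no real obstacle here; the only subtlety worth noting is that one must be careful to use the $\mu k^2$ dissipation (not just $\mu\|\pa_y\wh{\theta}\|^2$) to absorb the $ikg^1$ forcing so that the resulting constant in front of $\|g^1\|^2$ is $\mu^{-1}$ rather than $\mu^{-1}k^2$ — this is what forces the pairing of $g^1$ with the $k$-dissipation and $g^2$ with the $\pa_y$-dissipation above. The argument is purely $L^2$ and makes no use of the resolvent analysis of \cite{CLWZ2020}, which is consistent with the paper's comment that this proposition is only needed to control high frequencies of $\wh{\theta}$.
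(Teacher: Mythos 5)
Your argument is correct and is essentially identical to the paper's own proof: both take the $L^2$ inner product with $\wh{\theta}$, take real parts so the $iky\wh{\theta}$ term drops, integrate by parts using $\wh{\theta}(\pm1)=0$, and apply Young's inequality pairing $g^1$ with the $\mu k^2$ dissipation and $g^2$ with the $\mu\|\pa_y\wh{\theta}\|_{L^2}^2$ dissipation before integrating in time. No differences worth noting.
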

\begin{proof}
Taking $L^2$ inner product between \eqref{eq: theta} and $\theta$, we get 
\begin{align*}
\langle\pa_t\wh{\theta},\wh{\theta}\rangle-\mu\langle(\pa_y^2-k^2)\wh{\theta},\wh{\theta}\rangle+\langle iky\wh{\theta},\wh{\theta}\rangle=\langle -ikg^1-\pa_yg^2,\wh{\theta}\rangle.
\end{align*}
By taking the real part and integration by parts in the above equality, we obtain
\begin{align*}
\f12\f{d}{dt}\|\wh{\theta}\|_{L^2}^2+\mu\|\pa_y\wh{\theta}\|_{L^2}^2+\mu k^2\|\wh{\theta}\|_{L^2}^2&\leq C\|g^1\|_{L^2}\|k\wh{\theta}\|_{L^2}+C\|g^2\|_{L^2}\|\pa_y\wh{\theta}\|_{L^2}\\
&\leq \f14\mu\|k\wh{\theta}\|_{L^2}^2+C\mu^{-1}\|g^1\|_{L^2}^2+\f14\mu\|\pa_y\wh{\theta}\|_{L^2}+C\mu^{-1}\|g^2\|_{L^2}^2.
\end{align*}
Thus by integrating in time, we have
\begin{align*}
\|\wh{\theta}\|^2_{L^{\infty}L^2}+\mu k^2\|\wh{\theta}\|^2_{L^2L^2}+\mu\|\pa_y\wh{\theta}\|^2_{L^2L^2}\leq C\mu^{-1}\big(\|g^1\|^2_{L^2L^2}+\|g^2\|^2_{L^2L^2}\big)+\|\wh{\theta}_{in}\|^2_{L^2}.
\end{align*}
\end{proof}

In order to deal with the buoyancy term $\pa_x\theta$ in the vorticity equation, we also need to give the following estimates about $\wh{\theta}$.

First, we decompose $\wh{\theta}=\wh{\theta}_I+\wh{\theta}_H$, where $\wh{\theta}_I$ solves
\begin{align}\label{eq: theta I}
\pa_t\wh{\theta}_I-\mu(\pa_y^2-k^2)\wh{\theta}_I+iky\wh{\theta}_I=-ikg^1-\pa_yg^2, \ \wh{\theta}_I|_{t=0}=0,\ \wh{\theta}_I|_{y=\pm1}=0,
\end{align}
and $\wh{\theta}_H$ solves
\begin{align}
\pa_t\wh{\theta}_H-\mu(\pa_y^2-k^2)\wh{\theta}_H+ik\wh{\theta}_H=0, \ \wh{\theta}_H|_{t=0}=\wh{\theta}_{in},\ \wh{\theta}_H|_{y=\pm1}=0.
\end{align}

For the homogeneous part $\wh{\theta}_H$, by using transport diffusion structure and the Gearhart-Pr\"uss type lemma with sharp bound \cite{Wei},  we use the following estimates.
\begin{lemma}\label{lem: homo part}(Lemma 6.3 in \cite{CLWZ2020}.)
Let $\wh{\theta}_{in}\in L^2(-1,1)$. Then for any $k\in\bbZ$, there exist constants $C, c>0$ independent of $\mu,k$ such that 
\begin{align*}
\|\wh{\theta}_H\|_{L^2}\leq Ce^{-c\mu^{\f13}|k|^{\f23}t-\mu t}\|\wh{\theta}_{in}\|_{L^2}.
\end{align*}
Moreover, for any $|k|\geq1$,
\begin{align*}
(\mu k^2)^{\f13}\|\wh{\theta}_H\|^2_{L^2L^2}\leq C\|\wh{\theta}_{in}\|^2_{L^2}.
\end{align*}
\end{lemma}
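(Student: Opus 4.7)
The plan is to exploit the linear structure of the homogeneous equation
$$\pa_t\wh{\theta}_H+\mathcal{L}_k\wh{\theta}_H=0,\qquad \mathcal{L}_k:=-\mu(\pa_y^2-k^2)+iky,\qquad \wh{\theta}_H|_{y=\pm1}=0,$$
and apply the sharp Gearhart--Pr\"uss type lemma from \cite{Wei} (as indicated in the preamble to the lemma). The operator $\mathcal{L}_k$ is maximal accretive on $L^2(-1,1)$: its real part is the Dirichlet realization of $-\mu\pa_y^2+\mu k^2$ and its imaginary part is the bounded multiplier $ky$. Hence $\wh{\theta}_H(t)=e^{-t\mathcal{L}_k}\wh{\theta}_{in}$ is well-defined.

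For $k\neq 0$, the key input is the uniform-in-$\lambda$ resolvent bound
$$\sup_{\lambda\in\mathbb{R}}\|(\mathcal{L}_k-i\lambda)^{-1}\|_{L^2\to L^2}\le C\big(\mu^{\f13}|k|^{\f23}+\mu k^2\big)^{-1},$$
which I would establish (or rather invoke from \cite{CLWZ2020}) by testing $(\mathcal{L}_k-i\lambda)f=F$ first against $f$, yielding $\mu\|\pa_y f\|_{L^2}^2+\mu k^2\|f\|_{L^2}^2\le\|F\|_{L^2}\|f\|_{L^2}$, and then against a cutoff that localizes to the critical layer $y=\lambda/k$ of width $(\mu/|k|)^{1/3}$, giving the enhanced dissipation factor $\mu^{1/3}|k|^{2/3}$. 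The factor $\mu k^2$ is the trivial one coming from the $-\mu k^2$ shift in $\mathcal{L}_k$. Feeding this bound into the sharp Gearhart--Pr\"uss lemma of \cite{Wei} yields the semigroup estimate
$$\|e^{-t\mathcal{L}_k}\|_{L^2\to L^2}\le C\exp\bigl(-c\mu^{\f13}|k|^{\f23}t-c\mu k^2t\bigr),$$
and since $\mu k^2\ge\mu$ for $|k|\ge 1$, this proves the pointwise-in-time estimate for $k\neq 0$.

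For $k=0$, the operator reduces to $-\mu\pa_y^2$ with Dirichlet boundary conditions on $(-1,1)$, whose smallest eigenvalue is $\pi^2/4>1$, so $\|\wh{\theta}_H(t)\|_{L^2}\le e^{-\mu\pi^2 t/4}\|\wh{\theta}_{in}\|_{L^2}$; choosing $c\le 1$ absorbs this into the stated bound (the $\mu^{1/3}|k|^{2/3}$ term vanishes when $k=0$). The space-time bound then follows by squaring the pointwise bound and integrating in $t$:
$$\|\wh{\theta}_H\|_{L^2L^2}^2\le C\|\wh{\theta}_{in}\|_{L^2}^2\int_0^\infty e^{-2c\mu^{\f13}|k|^{\f23}t}\,dt\le C\bigl(\mu^{\f13}|k|^{\f23}\bigr)^{-1}\|\wh{\theta}_{in}\|_{L^2}^2=C(\mu k^2)^{-\f13}\|\wh{\theta}_{in}\|_{L^2}^2,$$
for $|k|\ge 1$, which rearranges to the claimed inequality.

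The only genuine obstacle is the uniform resolvent bound with the sharp exponent $\mu^{1/3}|k|^{2/3}$, and in particular handling the boundary case when the critical layer $\lambda/k$ sits near $\pm 1$; this is the technical heart of the argument but is already carried out in \emph{Section~3} of \cite{CLWZ2020} under Navier-slip conditions, and the Dirichlet case proceeds identically since the multiplier $iky$ does not feel the boundary and the Dirichlet Laplacian enjoys the same energy identity. Hence I would cite those resolvent estimates and apply Gearhart--Pr\"uss as a black box, as the authors indicate.
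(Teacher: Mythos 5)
Your proposal is correct and follows exactly the route the paper indicates for this lemma (which is quoted from Lemma 6.3 of \cite{CLWZ2020} rather than proved here): the pseudospectral lower bound $\Psi(\mathcal{L}_k)\gtrsim(\mu k^2)^{\f13}$ coming from the resolvent estimate of Lemma \ref{lem: L2 H-1}, fed into the sharp Gearhart--Pr\"uss lemma of \cite{Wei}, followed by integration in time to get the $L^2L^2$ bound. The only bookkeeping point is the coefficient $1$ on $\mu t$ in the exponent: instead of invoking $c\mu k^2\ge\mu$ (which would require $c\ge1$), use the Poincar\'e inequality $\mu\|\pa_yf\|_{L^2}^2\ge\tfrac{\pi^2}{4}\mu\|f\|_{L^2}^2$ in the accretivity identity, which yields $\Psi(\mathcal{L}_k)\ge \tfrac{c}{2}(\mu k^2)^{\f13}+\tfrac{\pi^2}{8}\mu+\tfrac12\mu k^2$ and hence the stated form since $\pi^2/8>1$.
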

For the inhomogeneous part, considering the system
 \begin{align}\label{eq: theta resolvent}
-\mu(\pa_y^2-k^2)\wh{\Theta}+ik(y-\la)\wh{\Theta}=F,\ \wh{\Theta}(\pm1)=0, 
\end{align}
we have the following sharp resolvent estimates for the linearized operator, which is very important for the space-time estimates of $\wh{\theta}_I$.
\begin{lemma}\label{lem: L2 H-1}\textsl{(Proposition 3.1 and Proposition 3.3 in \cite{CLWZ2020}.)}
Let $\wh{\Theta}\in H^2(-1,1)$ be a solution of \eqref{eq: theta resolvent} with $\la\in\bbR$. Then it holds for  $F\in L^2(-1,1)$,
\begin{align*}
\mu^{\f23}|k|^{\f13}\|\pa_y\wh{\Theta}\|_{L^2}+(\mu k^2)^{\f13}\|\wh{\Theta}\|_{L^2}+|k|\|(y-\la)\wh{\Theta}\|_{L^2}\leq C\|F\|_{L^2},
\end{align*}
and for $F\in H^{-1}(-1,1)$, 
 \begin{align*}
\mu\|\pa_y\wh{\Theta}\|_{L^2}+\mu^{\f23} |k|^{\f13}\|\wh{\Theta}\|_{L^2}\leq C\|F\|_{H^{-1}}.
\end{align*}
\end{lemma}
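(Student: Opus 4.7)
The plan is to establish both bounds by a hypocoercivity-type argument combining the standard $L^2$ energy identity with a weighted identity against the multiplier $(y-\la)\bar{\wh{\Theta}}$, and then converting the transport weight into honest control of $\|\wh{\Theta}\|_{L^2}$ itself by exploiting the homogeneous Dirichlet condition. Throughout, the key heuristic is that the problem has an intrinsic critical-layer width $(\mu/|k|)^{1/3}$, which should dictate all exponents.

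First, I would take the $L^2$ inner product of \eqref{eq: theta resolvent} with $\bar{\wh{\Theta}}$. The real part gives the diffusion dissipation $\mu\|\pa_y\wh{\Theta}\|_{L^2}^2+\mu k^2\|\wh{\Theta}\|_{L^2}^2=\mathrm{Re}\,\langle F,\wh{\Theta}\rangle$. Next I pair the equation with $(y-\la)\bar{\wh{\Theta}}$, integrate by parts against $\pa_y^2\wh{\Theta}$ (no boundary contribution since $\wh{\Theta}(\pm1)=0$), and take the imaginary part to obtain the central inequality
\[|k|\,\|(y-\la)\wh{\Theta}\|_{L^2}^2\leq C\mu\,\|\wh{\Theta}\|_{L^2}\|\pa_y\wh{\Theta}\|_{L^2}+\|(y-\la)\wh{\Theta}\|_{L^2}\|F\|_{L^2}.\]
This is the nondiagonal transport--diffusion identity responsible for the enhanced-dissipation gain and for the $|k|\|(y-\la)\wh{\Theta}\|_{L^2}$ term of the thesis.

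To convert this into a bound on $\|\wh{\Theta}\|_{L^2}$, I would split $(-1,1)$ into $\{|y-\la|\le\delta\}$ and its complement, estimate the first piece by Sobolev embedding together with $\wh{\Theta}(\pm1)=0$, and the second by the transport weight, arriving at the localization inequality
\[\|\wh{\Theta}\|_{L^2}^2\leq C\delta\,\|\wh{\Theta}\|_{L^2}\|\pa_y\wh{\Theta}\|_{L^2}+\delta^{-2}\|(y-\la)\wh{\Theta}\|_{L^2}^2\qquad(\delta>0).\]
Feeding in the two identities above and optimizing $\delta\sim(\mu/|k|)^{1/3}$ produces the three-term $L^2$ bound with the claimed exponents $\mu^{2/3}|k|^{1/3}$ and $(\mu k^2)^{1/3}$. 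For the $H^{-1}$ bound I would decompose $F=F_1-\pa_y F_2$ with $\|F_1\|_{L^2}+\|F_2\|_{L^2}\sim\|F\|_{H^{-1}}$; the $F_1$ piece is handled by the previous estimate, while for $\pa_y F_2$ an integration by parts places $F_2$ in duality with $\pa_y\wh{\Theta}$ in the energy identity (respectively with $\wh{\Theta}+(y-\la)\pa_y\wh{\Theta}$ in the weighted one), and rerunning the same $\delta$-optimization gives the $\mu$ and $\mu^{2/3}|k|^{1/3}$ gains.

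The main obstacle is the third step: the sharp exponents $2/3$ and $1/3$ emerge only after a careful cancellation between the Sobolev, transport, and diffusion contributions, and one must absorb several cross terms into the left-hand side without degrading the scaling. This is essentially the Airy-function heuristic for the steady Orr--Sommerfeld heat operator, and since the lemma is cited from \cite{CLWZ2020}, the argument there is likely an explicit resolvent-kernel analysis via Airy functions, which the interpolation route proposed here is designed to mimic by purely energy-type means.
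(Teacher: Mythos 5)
First, a point of comparison: the paper does not prove this lemma at all. It is imported verbatim as Propositions 3.1 and 3.3 of \cite{CLWZ2020} and explicitly used as a black box, so your proposal has to be measured against the argument in that reference rather than against anything in the present text.

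Your plan for the $L^2$ estimate is essentially the right one and matches the standard energy/multiplier proof: the identity $\mu\|\pa_y\wh{\Theta}\|_{L^2}^2+\mu k^2\|\wh{\Theta}\|_{L^2}^2=\mathrm{Re}\langle F,\wh{\Theta}\rangle$, the weighted identity against $(y-\la)\overline{\wh{\Theta}}$, the split of $(-1,1)$ at distance $\delta$ from the critical layer using $\|\wh{\Theta}\|_{L^\infty}^2\le 2\|\wh{\Theta}\|_{L^2}\|\pa_y\wh{\Theta}\|_{L^2}$ (which needs the Dirichlet condition, as you note), and the choice $\delta\sim(\mu/|k|)^{1/3}$ do close, provided you solve the resulting quadratic inequality in $\|(y-\la)\wh{\Theta}\|_{L^2}$ rather than crudely bounding it by $\|\wh{\Theta}\|_{L^2}$, and treat $|\la|$ large separately. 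All three terms of the first bound then follow.

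The genuine gap is in the $H^{-1}$ estimate, where ``rerunning the same $\delta$-optimization'' fails. Writing $F=-\pa_yF_2$ with $\|F_2\|_{L^2}\sim\|F\|_{H^{-1}}$, the weighted identity produces the pairing $\langle F_2,\pa_y((y-\la)\wh{\Theta})\rangle=\langle F_2,\wh{\Theta}\rangle+\langle F_2,(y-\la)\pa_y\wh{\Theta}\rangle$, and the second term can only be bounded by $C\|F_2\|_{L^2}\|\pa_y\wh{\Theta}\|_{L^2}\le C\mu^{-1}\|F_2\|_{L^2}^2$. Feeding this into your localization inequality, the term $\delta^{-2}|k|^{-1}\mu^{-1}\|F_2\|^2$ forces $\delta\gtrsim(\mu/|k|)^{1/6}$ if the target $\|\wh{\Theta}\|_{L^2}\le C\mu^{-2/3}|k|^{-1/3}\|F_2\|_{L^2}$ is to survive, while the term $\delta\|\wh{\Theta}\|\|\pa_y\wh{\Theta}\|\le\delta\mu^{-1}\|F_2\|\|\wh{\Theta}\|$ forces $\delta\lesssim(\mu/|k|)^{1/3}$; these are incompatible, and the loss is exactly a factor $(|k|/\mu)^{1/3}$. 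Two standard repairs exist. One is to replace the multiplier $(y-\la)$ by the truncated weight $\chi(y)=\mathrm{sgn}(y-\la)\min\{|y-\la|,\delta\}$, so that $|\chi|\le\delta$ and the offending pairing becomes $\langle F_2,\chi\pa_y\wh{\Theta}\rangle\le\delta\mu^{-1}\|F_2\|^2$, which restores compatibility at $\delta=(\mu/|k|)^{1/3}$; this is in the spirit of what \cite{CLWZ2020} actually does. The other is pure duality: since the adjoint operator $-\mu(\pa_y^2-k^2)-ik(y-\la)$ with the same boundary conditions satisfies the first estimate, in particular $\mu^{\f23}|k|^{\f13}\|\pa_y(L^*)^{-1}g\|_{L^2}\le C\|g\|_{L^2}$, one gets
\begin{align*}
|\langle \wh{\Theta},g\rangle|=|\langle \pa_yF_2,(L^*)^{-1}g\rangle|=|\langle F_2,\pa_y(L^*)^{-1}g\rangle|\le C\mu^{-\f23}|k|^{-\f13}\|F_2\|_{L^2}\|g\|_{L^2},
\end{align*}
which is the claimed $L^2$ bound, while $\mu\|\pa_y\wh{\Theta}\|_{L^2}\le\|F_2\|_{L^2}$ already follows from the energy identity. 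Either route completes the proof; as written, your third step does not.
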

\begin{proposition}\label{prop: theta I}
Let $\wh{\theta}_I$ be a solution of \eqref{eq: theta I}. Then there exists a constant $C>0$ independent of $\mu,k$ such that 
\begin{align*}
(\mu k^2)^{\f13}\|\wh{\theta}_I\|_{L^2L^2}^2+\|\wh{\theta}_I\|_{L^{\infty}L^2}^2\leq C\big(\mu^{-\f13}|k|^{\f43}\|g^1\|_{L^2L^2}^2+\mu^{-1}\|g^2\|_{L^2L^2}^2\big).
\end{align*}
\end{proposition}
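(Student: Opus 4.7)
My plan is to reduce the initial-boundary value problem \eqref{eq: theta I} to the resolvent problem \eqref{eq: theta resolvent} via Fourier transform in time, and then invoke Lemma \ref{lem: L2 H-1} as a black box. Since $\wh{\theta}_I$ vanishes at $t=0$, extending $\wh{\theta}_I, g^1, g^2$ by zero to $t<0$ preserves the equation on all of $\bbR_t$. Taking the $t$-Fourier transform with dual variable $\tau$ and writing $\Theta_\tau(y)$, $G^j_\tau(y)$ for the transforms, the resulting ODE in $y$ is
\beqno
-\mu(\pa_y^2-k^2)\Theta_\tau+ik(y-\la)\Theta_\tau=-ikG^1_\tau-\pa_yG^2_\tau,\quad \Theta_\tau(\pm 1)=0,
\eeqno
with $\la=-\tau/k\in\bbR$, so Lemma \ref{lem: L2 H-1} applies at every $\tau$.

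Splitting the source and applying the $L^2$-bound of Lemma \ref{lem: L2 H-1} to $-ikG^1_\tau$ and the $H^{-1}$-bound to $-\pa_yG^2_\tau$ gives, pointwise in $\tau$,
\beqno
\|\Theta_\tau\|_{L^2_y}\leq C\mu^{-\f13}|k|^{\f13}\|G^1_\tau\|_{L^2_y}+C\mu^{-\f23}|k|^{-\f13}\|G^2_\tau\|_{L^2_y},
\eeqno
together with an analogous bound for $\|\pa_y\Theta_\tau\|_{L^2_y}$ (costing an extra factor $\mu^{-\f13}|k|^{\f13}$ on the $G^1$-piece and producing $\mu^{-1}$ on the $G^2$-piece). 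Squaring and integrating in $\tau$, Plancherel in $t$ transfers these into the space-time bounds
\beqno
\|\wh{\theta}_I\|_{L^2L^2}^2\leq C\mu^{-\f23}|k|^{\f23}\|g^1\|_{L^2L^2}^2+C\mu^{-\f43}|k|^{-\f23}\|g^2\|_{L^2L^2}^2,
\eeqno
and the analogue for $\pa_y\wh{\theta}_I$. Multiplying the first display by $(\mu k^2)^{\f13}$ yields exactly the $L^2L^2$ half of the proposition.

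For the $L^\infty L^2$ bound I would close the standard energy identity using the sharper $L^2L^2$ bounds just obtained, rather than Young's inequality against the dissipation. As in the proof of Proposition \ref{prop: theta energy esti}, integrating on $[0,T]$ yields
\beqno
\f12\|\wh{\theta}_I(T)\|_{L^2_y}^2+\mu\|\pa_y\wh{\theta}_I\|_{L^2([0,T];L^2)}^2+\mu k^2\|\wh{\theta}_I\|_{L^2([0,T];L^2)}^2\leq |k|\|g^1\|_{L^2L^2}\|\wh{\theta}_I\|_{L^2L^2}+\|g^2\|_{L^2L^2}\|\pa_y\wh{\theta}_I\|_{L^2L^2},
\eeqno
into which I substitute the $L^2L^2$ bounds from the Fourier step; the cross terms $\|g^1\|\|g^2\|$ are absorbed by AM-GM into $\mu^{-\f13}|k|^{\f43}\|g^1\|^2+\mu^{-1}\|g^2\|^2$. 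Taking the supremum over $T$ finishes the proof.

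The main obstacle is precisely the $L^\infty L^2$ piece: Fourier in time and Plancherel inherently produce only $L^2_t$-control, while the naive energy identity of Proposition \ref{prop: theta energy esti} gives only a $\mu^{-1}$-prefactor on the $g^1$-term, which is weaker than the claimed $\mu^{-\f13}|k|^{\f43}$ in exactly the regime $|k|\lesssim\mu^{-1/2}$ where the resolvent structure is essential. Inserting the resolvent-based $L^2L^2$ bound into the right-hand side of the energy identity is the key step that converts the $|k|$-loss from the forcing $-ik g^1$ into the sharp enhanced-dissipation dependence.
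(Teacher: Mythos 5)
Your proposal is correct and follows essentially the same route as the paper: Fourier transform in time (legitimate since $\wh{\theta}_I|_{t=0}=0$), splitting of the source into the $-ikG^1$ and $-\pa_yG^2$ pieces, the $L^2$ and $H^{-1}$ resolvent bounds of Lemma \ref{lem: L2 H-1} combined with Plancherel for the $L^2L^2$ half, and then the energy identity fed with the resolvent-based $L^2L^2$ control to get the $L^\infty L^2$ half. The only cosmetic difference is that the paper applies Young's inequality pointwise in time against $(\mu k^2)^{\f13}\|\wh{\theta}_I\|_{L^2}^2$ before integrating, whereas you integrate first and substitute; both hinge on the same observation you correctly identify as the key step.
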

\begin{proof}
Now we use the resolvent estimates in Lemma \ref{lem: L2 H-1} to obtain the semigroup estimates.
By taking the Fourier transform in $t$:
\begin{align*}
\wh{\theta}(\la,k,y)
&=\int_0^{+\infty}\wh{\theta}_I(t,k,y)e^{-it\la}dt,\\
G^j(\la,k,y)&=\int_0^{+\infty}g^j(t,k,y)e^{-it\la}dt, \ j=1,2,
\end{align*}
we get that from \eqref{eq: theta I},
\begin{align}
(i\la-\mu(\pa_y^2-k^2)+iky)\wh{\theta}(\la,k,y)=-ikG^1(\la,k,y)-\pa_yG^2(\la,k,y).
\end{align}
Using Plancherel's theorem, we know that 
\begin{align*}
&\int_0^{+\infty}\|\wh{\theta}_I(t)\|_{L^2}^2dt\sim\int_{\bbR}\|\wh{\theta}(\la)\|_{L^2}^2d\la,\\
&\int_0^{+\infty}\|g^j(t)\|_{L^2}^2dt\sim\int_{\bbR}\|G^j(\la)\|_{L^2}^2d\la,\ j=1,2.
\end{align*}
We further decompose $\wh{\theta}_I=\wh{\theta}_I^{(1)}+\wh{\theta}_I^{(2)}$, where $\wh{\theta}_I^{(1)}$ and $\wh{\theta}_I^{(2)}$ solve
\begin{align*}
&(i\la-\mu(\pa_y^2-k^2)+iky)\wh{\theta}_I^{(1)}(\la,k,y)=-ikG^1(\la,k,y), \wh{\theta}_I^{(1)}|_{y=\pm1}=0,\\
&(i\la-\mu(\pa_y^2-k^2)+iky)\wh{\theta}_I^{(2)}(\la,k,y)=-\pa_yG^2(\la,k,y), \wh{\theta}_I^{(2)}|_{y=\pm1}=0.
\end{align*}
By Lemma \ref{lem: L2 H-1}, we get
\begin{align*}
(\mu k^2)^{\f13}\|\wh{\theta}_I^{(1)}(\la)\|_{L^2}\leq C\|kG^1(\la)\|_{L^2},
\end{align*}
and 
\begin{align*}
\mu^{\f23}|k|^{\f13}\|\wh{\theta}_I^{(2)}(\la)\|_{L^2}\leq C\|G^2(\la)\|_{L^2}.
\end{align*}
Then, by Plancherel's theorem, we have
\begin{align*}
&(\mu k^2)^{\f13}\|\wh{\theta}_I\|_{L^2L^2}^2\sim (\mu k^2)^{\f13}\Big\|\|\wh{\theta}_I(\la)\|_{L^2}\Big\|_{L^2(\bbR)}^2\\
&\quad \leq 2(\mu k^2)^{\f13}\Big(\Big\|\|\wh{\theta}_I^{(1)}(\la)\|_{L^2}\Big\|_{L^2(\bbR)}^2+\Big\|\|\wh{\theta}_I^{(2)}(\la)\|_{L^2}\Big\|_{L^2(\bbR)}^2\Big)\\
&\quad\leq C(\mu k^2)^{\f13}\Big(\Big\|(\mu k^2)^{-\f13}\|kG^1(\la)\|_{L^2}\Big\|_{L^2(\bbR)}^2+\Big\|\mu^{-\f23}|k|^{-\f13}\|G^2(\la)\|_{L^2}\Big\|_{L^2(\bbR)}^2\Big)\\
&\quad=C\mu^{-\f13}|k|^{\f43}\Big\|\|G^1(\la)\|_{L^2}\Big\|_{L^2(\bbR)}^2+C\mu^{-1}\Big\|\|G^2(\la)\|_{L^2}\Big\|_{L^2(\bbR)}^2\\
&\quad\sim \mu^{-\f13}|k|^{\f43}\|g^1\|_{L^2L^2}^2+\mu^{-1}\|g^2\|_{L^2L^2}^2.
\end{align*}
Next we estimate $\|\wh{\theta}_I\|_{L^{\infty}L^2}$. Notice that 
\begin{align*}
&\f12\pa_t\|\wh{\theta}_I\|_{L^2}^2+\mu\|\pa_y\wh{\theta}_I\|_{L^2}^2+\mu k^2\|\theta_I\|_{L^2}^2\\
&\quad=\text{Re}\langle(\pa_t-\mu(\pa_y^2-k^2)+iky)\wh{\theta}_I,\wh{\theta}_I\rangle\\
&\quad=\text{Re}\langle-ikg^1-\pa_yg^2,\wh{\theta}_I\rangle=\text{Re}\Big(-ik\langle g^1,\wh{\theta}_I\rangle+\langle g^2,\pa_y\wh{\theta}_I\rangle\Big)\\
&\quad\leq|k|\|g^1\|_{L^2}\|\wh{\theta}_I\|_{L^2}+\|g^2\|_{L^2}\|\pa_y\wh{\theta}_I\|_{L^2},
\end{align*}
which gives 
\begin{align*}
\pa_t\|\wh{\theta}_I\|_{L^2}^2+\mu\|\pa_y\wh{\theta}_I\|_{L^2}^2+2\mu k^2\|\wh{\theta}_I\|_{L^2}\leq \mu^{-\f13}|k|^{\f43}\|g^1\|_{L^2}^2+(\mu k^2)^{\f13}\|\wh{\theta}_I\|_{L^2}^2+\mu^{-1}\|g^2\|_{L^2}^2.
\end{align*}
As $\wh{\theta}_I|_{t=0}=0$, this shows that 
\begin{align*}
\|\wh{\theta}_I(t)\|_{L^2}^2&\leq\int_0^t\Big(\mu^{-\f13}|k|^{\f43}\|g^1(s)\|_{L^2}^2+(\mu k^2)^{\f13}\|\wh{\theta}_I(s)\|_{L^2}^2+\mu^{-1}\|g^2(s)\|_{L^2}^2\Big)ds\\
&\leq \mu^{-\f13}|k|^{\f43}\|g^1\|_{L^2L^2}^2+(\mu k^2)^{\f13}\|\wh{\theta}_I\|_{L^2L^2}^2+\mu^{-1}\|g^2\|_{L^2L^2}^2\\
&\leq C(\mu^{-\f13}|k|^{\f43}\|g^1\|_{L^2L^2}^2+\mu^{-1}\|g^2\|_{L^2L^2}^2).
\end{align*}
Thus, we get
\begin{align*}
\|\wh{\theta}_I\|_{L^{\infty}L^2}^2\leq C(\mu^{-\f13}|k|^{\f43}\|g^1\|_{L^2L^2}^2+\mu^{-1}\|g^2\|_{L^2L^2}^2).
\end{align*}

This completes the proof of Proposition \ref{prop: theta I}. 
\end{proof}

Thus, combining Lemma \ref{lem: homo part} and Proposition \ref{prop: theta I},  we immediately obtain the following space-time estimates of $\wh{\theta}$.
\begin{proposition}\label{prop: theta}
Let $\wh{\theta}$ be a solution of \eqref{eq: theta} with $\wh{\theta}_{in}\in L^2(-1,1)$ and $g^1,g^2\in L^2L^2$. Then there exists a constant $C>0$ independent in $\mu,k$ such that 
\begin{align*}
\|\wh{\theta}\|_{L^{\infty}L^2}^2+(\mu k^2)^{\f13}\|\wh{\theta}\|_{L^2L^2}^2\leq \|\wh{\theta}_{in}\|_{L^2}^2+C\Big(\mu^{-\f13}|k|^{\f43}\|g^1\|_{L^2L^2}^2+\mu^{-1}\|g^2\|_{L^2L^2}^2\Big).
\end{align*}
\end{proposition}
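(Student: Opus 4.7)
The plan is to exploit the linearity of \eqref{eq: theta} by using the splitting $\wh\theta = \wh\theta_H + \wh\theta_I$ already introduced in the paper, which separates the effect of the initial data from that of the forcing. The two pieces are then handled by results already at our disposal: Lemma \ref{lem: homo part} bounds the homogeneous evolution $\wh\theta_H$, and Proposition \ref{prop: theta I} bounds the inhomogeneous evolution $\wh\theta_I$. It remains only to add these estimates via the triangle inequality in both the $L^\infty L^2$ and the weighted $L^2 L^2$ norms appearing on the left-hand side of the target estimate.

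More concretely, for $|k|\geq 1$ Lemma \ref{lem: homo part} yields $\|\wh\theta_H\|_{L^\infty L^2}^2 \leq C\|\wh\theta_{in}\|_{L^2}^2$ (from the pointwise-in-time exponential bound) together with $(\mu k^2)^{1/3}\|\wh\theta_H\|_{L^2L^2}^2 \leq C\|\wh\theta_{in}\|_{L^2}^2$, while Proposition \ref{prop: theta I} directly delivers
\[
\|\wh\theta_I\|_{L^\infty L^2}^2 + (\mu k^2)^{\f13}\|\wh\theta_I\|_{L^2L^2}^2 \leq C\Big(\mu^{-\f13}|k|^{\f43}\|g^1\|_{L^2L^2}^2 + \mu^{-1}\|g^2\|_{L^2L^2}^2\Big).
\]
Summing the two after applying $\|\wh\theta\|^2 \leq 2\|\wh\theta_H\|^2 + 2\|\wh\theta_I\|^2$ in both norms, and absorbing numerical constants into $C$, yields the claimed estimate.

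The case $k=0$ must be handled separately, since Lemma \ref{lem: homo part}'s space-time bound is stated only for $|k|\geq 1$. Fortunately, when $k=0$ both the weighted $L^2L^2$ norm on the left and the $g^1$-term on the right vanish, so the inequality reduces to $\|\wh\theta\|_{L^\infty L^2}^2 \leq \|\wh\theta_{in}\|_{L^2}^2 + C\mu^{-1}\|g^2\|_{L^2L^2}^2$. This follows from a direct energy estimate in the style of Proposition \ref{prop: theta energy esti}: test \eqref{eq: theta} against $\wh\theta$, integrate $\pa_y g^2$ by parts, and absorb $\mu\|\pa_y\wh\theta\|_{L^2}^2$ via Young's inequality before integrating in time.

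I do not foresee a genuine obstacle here, since the statement is essentially a linear-superposition repackaging of Lemma \ref{lem: homo part} and Proposition \ref{prop: theta I}. The only cosmetic point is the coefficient $1$ in front of $\|\wh\theta_{in}\|_{L^2}^2$ in the target inequality: the decomposition actually produces a constant $C\|\wh\theta_{in}\|_{L^2}^2$, which should either be read as absorbed inside the overall $C$ multiplying every right-hand-side term, or, alternatively, eliminated by first obtaining the $(\mu k^2)^{1/3}\|\wh\theta\|_{L^2L^2}^2$ bound from the decomposition and then feeding it back into a direct energy estimate on $\wh\theta$ itself (using $2|k|\|g^1\|_{L^2}\|\wh\theta\|_{L^2}\leq \mu^{-1/3}|k|^{4/3}\|g^1\|_{L^2}^2+(\mu k^2)^{1/3}\|\wh\theta\|_{L^2}^2$) to keep the initial-data coefficient exactly one.
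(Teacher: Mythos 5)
Your proposal is correct and follows exactly the paper's route: the paper obtains Proposition \ref{prop: theta} by "combining Lemma \ref{lem: homo part} and Proposition \ref{prop: theta I}" via the decomposition $\wh{\theta}=\wh{\theta}_H+\wh{\theta}_I$, with no further detail given. Your additional remarks on the $k=0$ case and on recovering the coefficient $1$ in front of $\|\wh{\theta}_{in}\|_{L^2}^2$ address fine print the paper silently absorbs into the constant $C$, and both fixes you propose are sound.
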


\section{Nonlinear stability}\label{sec: nonlinear}
In this section, we prove Theorem \ref{main thm}.  Due to the buoyancy term $\pa_x\theta$ in the equation of the vorticity, we need to estimate $\||D_x|^{\f16}\theta(t)\|_{L^2}$ in order to control the buoyancy term. In fact, for the two-dimensional Boussinesq equation, the global existence of smooth solution is well-known for the data $u_{in}\in H^2(\Om), \theta_{in}\in H^1(\Om)$ and $|D_x|^{\f16}\theta_{in}\in H^1(\Om)$. The main interest of Theorem \ref{main thm} is the stability estimates 
\begin{align}\label{main esti}
\sum_{k\in\bbZ}E_k\leq C\e_0\min\{\nu,\mu\}^{\f12}, \ \sum_{k\in\bbZ}H_k\leq C\e_1\min\{\nu,\mu\}^{\f{11}{12}}.
\end{align}
Here $E_0=\|\wh{w}_0\|_{L^{\infty}L^2}$ and $H_0=\|\wh{\theta}_0\|_{L^{\infty}L^2}$, and for $k\neq0$,
\begin{align*}
E_k=\|(1-|y|)^{\f12}\wh{w}_k\|_{L^{\infty}L^2}+|k|\|\wh{u}_k\|_{L^2L^2}+|k|^{\f12}\|\wh{u}_k\|_{L^{\infty}L^{\infty}}+(\nu k^2)^{\f14}\|\wh{w}_k\|_{L^2L^2},
\end{align*}
and 
\begin{align*}
H_k=|k|^{\f16}\|\wh{\theta}_k\|_{L^{\infty}L^2}+\mu^{\f16} |k|^{\f12}\|\wh{\theta}_k\|_{L^2L^2}.
\end{align*}
And we can get the following estimates, which along with bootstrap arguments, then we can easily deduce the estimates \eqref{main esti}.\begin{proposition}\label{prop: EFG} 
There hold that, for $k\neq0$, 
\begin{align}\label{Ek}
E_k\leq\|\wh{w}_{in,k}\|_{L^2}+|k|^{-1}\|\pa_y\wh{w}_{in,k}\|_{L^2}+C\nu^{-\f12}\sum_{l\in\bbZ}E_lE_{k-l}+C\nu^{-\f14}\mu^{-\f16}H_k,
\end{align}
and
\begin{align}\label{E0}
E_0\leq \|\wh{w}_{in,0}\|_{L^2}+C\nu^{-\f12}\sum_{l\in\bbZ\setminus\{0\}}E_lE_{-l}.
\end{align}
For $H_0$, there holds that 
\begin{align}\label{F0}
H_0\lesssim\|\wh{\theta}_{in,0}\|_{L^2}+\mu^{-\f12}\sum_{l\in\bbZ\setminus\{0\}}|l|^{-\f23}E_lH_{-l}.
\end{align}
For $k\neq0$, there hold that \\
1. for $\mu k^2\leq1$, 
\begin{align}\label{esti: F low}
H_k\lesssim|k|^{\f16}\|\wh{\theta}_{in,k}\|_{L^2}+\mu^{-\f12}\sum_{l\in\bbZ}E_lH_{k-l}+\nu^{-\f18}\mu^{-\f{5}{24}}\sum_{l\in\bbZ\setminus\{0,k\},|k-l|\leq\f{|k}{2}}E_lH_{k-l};
\end{align}
2. for $\mu k^2>1$, 
\begin{align}\label{esti: F high}
H_k\lesssim |k|^{\f16}\|\wh{\theta}_{in,k}\|_{L^2}+\mu^{-\f12}\sum_{l\in\bbZ}E_lH_{k-l}+\nu^{-\f18}\mu^{-\f{5}{24}}E_kH_0.
\end{align}
 \end{proposition}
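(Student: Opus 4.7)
The plan is to derive each of the five bounds by recognising the equation for $\wh{\om}_k$ as an instance of \eqref{eq: om general} and that for $\wh{\theta}_k$ as an instance of \eqref{eq: theta}, and then invoking the space-time estimates of Section \ref{sec: space-time esti}. Concretely, I will take the Fourier transform in $x$ of \eqref{eq: vorticity}, use $\nabla\cdot u=0$ to write $u\cdot\na\om=\pa_x(u^1\om)+\pa_y(u^2\om)$ and $u\cdot\na\theta=\pa_x(u^1\theta)+\pa_y(u^2\theta)$, and read off $f^1=\widehat{u^1\om}_k+\wh{\theta}_k$ (so the buoyancy sits inside $f^1$), $f^2=\widehat{u^2\om}_k$, $g^1=\widehat{u^1\theta}_k$, $g^2=\widehat{u^2\theta}_k$. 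The substantive work is then to bound the bilinear forcings $\widehat{ab}_k(y)=\sum_{l\in\bbZ}\wh{a}_l(y)\wh{b}_{k-l}(y)$ in $L^2_tL^2_y$ by the energies $E_l,H_l$.

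For \eqref{Ek} and \eqref{E0}: Proposition \ref{prop: w} yields $E_k\lesssim\|\wh{w}_{in,k}\|_{L^2}+|k|^{-1}\|\pa_y\wh{w}_{in,k}\|_{L^2}+\nu^{-\f14}|k|^{\f12}(\|\widehat{u^1\om}_k\|_{L^2L^2}+\|\wh{\theta}_k\|_{L^2L^2})+\nu^{-\f12}\|\widehat{u^2\om}_k\|_{L^2L^2}$. The buoyancy piece is controlled directly by $\nu^{-\f14}|k|^{\f12}\|\wh{\theta}_k\|_{L^2L^2}\leq\nu^{-\f14}\mu^{-\f16}H_k$ via the enhanced-dissipation factor $\mu^{\f16}|k|^{\f12}\|\wh{\theta}_k\|_{L^2L^2}\leq H_k$ built into $H_k$. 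Each nonlinear product is estimated by H\"older using the two $L^\infty L^\infty$-type bounds $|l|^{\f12}\|\wh{u}_l\|_{L^\infty L^\infty}\leq E_l$ and the enhanced-dissipation bound $(\nu(k-l)^2)^{\f14}\|\wh{\om}_{k-l}\|_{L^2L^2}\leq E_{k-l}$ that are encoded in the $E$'s. The ensuing factor $\nu^{-\f12}|k|^{\f12}|l|^{-\f12}|k-l|^{-\f12}E_lE_{k-l}$ becomes $\nu^{-\f12}E_lE_{k-l}$ after the standard frequency split $|k|\leq 2\max(|l|,|k-l|)$; the degenerate modes $l=0,k$ are handled via $\pa_y\wh{u}^1_0=\wh{\om}_0$ and the Poincar\'e inequality, together with $\|\wh{\om}_0\|_{L^\infty L^2}=E_0$. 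The $k=0$ estimate \eqref{E0} is identical but with no buoyancy since $\widehat{\pa_x\theta}_0=0$, and the sum is automatically over $l\neq 0$ because the $l=0$ contribution vanishes from $\pa_x$.

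For \eqref{F0}: the $k=0$ slice is $\pa_t\wh{\theta}_0-\mu\pa_y^2\wh{\theta}_0=-\pa_y\widehat{u^2\theta}_0$, and Proposition \ref{prop: theta energy esti} gives $H_0\lesssim\|\wh{\theta}_{in,0}\|_{L^2}+\mu^{-\f12}\|\widehat{u^2\theta}_0\|_{L^2L^2}$. Here the crucial observation is that $\nabla\cdot u=0$ implies $\pa_y\wh{u}^2_l=-il\wh{u}^1_l$; combining this with the no-slip boundary condition and the 1D Gagliardo--Nirenberg inequality $\|f\|_{L^\infty_y}\lesssim\|f\|_{L^2_y}^{1/2}\|\pa_y f\|_{L^2_y}^{1/2}$ together with the $E_l$-piece $|l|\|\wh{u}_l\|_{L^2L^2}\leq E_l$ produces $\|\wh{u}^2_l\|_{L^2_tL^\infty_y}\lesssim|l|^{-\f12}E_l$. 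Paired with $\|\wh{\theta}_{-l}\|_{L^\infty L^2}\leq|l|^{-\f16}H_{-l}$ this yields the weight $|l|^{-\f23}E_lH_{-l}$ required in \eqref{F0}.

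For \eqref{esti: F high} (the regime $\mu k^2>1$): Proposition \ref{prop: theta energy esti} suffices; multiplying it by $|k|^{\f13}$ and using $\mu|k|^{\f73}\geq\mu^{\f13}|k|$ gives $H_k\lesssim|k|^{\f16}\|\wh{\theta}_{in,k}\|_{L^2}+\mu^{-\f12}|k|^{\f16}(\|g^1\|_{L^2L^2}+\|g^2\|_{L^2L^2})$, and the diagonal convolution analysis above gives the $\mu^{-\f12}\sum_l E_lH_{k-l}$ term; the only leftover is the $l=k$ contribution $|k|^{\f16}\|\wh{u}_k\|_{L^2L^\infty}\|\wh{\theta}_0\|_{L^\infty L^2}$, which one dresses with the enhanced-dissipation bound on $\wh{\om}_k$ and a Sobolev interpolation in $y$ to produce the extra $\nu^{-\f18}\mu^{-\f{5}{24}}E_kH_0$ term. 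For \eqref{esti: F low} (the regime $\mu k^2\leq 1$) one must use the sharper Proposition \ref{prop: theta} to get $H_k\lesssim|k|^{\f16}\|\wh{\theta}_{in,k}\|_{L^2}+\mu^{-\f16}|k|^{\f56}\|g^1\|_{L^2L^2}+\mu^{-\f12}|k|^{\f16}\|g^2\|_{L^2L^2}$. Terms with $|k-l|\geq\tfrac{|k|}{2}$ (including $l=0$) are benign since the prefactor $|k|^{\f16}|k-l|^{-\f16}$ is bounded. The main obstacle is the mid-frequency regime $l\neq 0,k$ with $|k-l|\leq\tfrac{|k|}{2}$ and hence $|l|\sim|k|$: here the naive estimates lose a power $|k|^{\f16}$ that cannot be absorbed, and I pair the enhanced-dissipation bounds $\|\wh{\om}_l\|_{L^2L^2}\leq\nu^{-\f14}|l|^{-\f12}E_l$ and $\|\wh{\theta}_{k-l}\|_{L^2L^2}\leq\mu^{-\f16}|k-l|^{-\f12}H_{k-l}$ with a H\"older split in time and a Sobolev interpolation in $y$ (using the no-slip/zero boundary conditions on $\wh{u}^2_l$ and $\wh{\theta}_{k-l}$) to trade the $L^\infty_y$ loss for fractional powers of $\nu$ and $\mu$; a careful book-keeping of exponents produces exactly the coefficient $\nu^{-\f18}\mu^{-\f{5}{24}}$, completing \eqref{esti: F low}.
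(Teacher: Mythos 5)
Your overall architecture is the paper's: take the Fourier transform in $x$, write the transport terms in divergence form, feed the resulting forcings into the linear space--time estimates of Section \ref{sec: space-time esti} (Proposition \ref{prop: w} for $\wh{w}_k$, Propositions \ref{prop: theta energy esti} and \ref{prop: theta} for $\wh{\theta}_k$ in the two frequency regimes), and close via convolution estimates with the splits $l=0$, $l=k$, $|k-l|\lessgtr |k|/2$ and the Gagliardo--Nirenberg bounds $\|\wh{u}^1_k\|_{L^2L^\infty}\lesssim\nu^{-1/8}|k|^{-3/4}E_k$, $\|\wh{u}^2_k\|_{L^2L^\infty}\lesssim|k|^{-1/2}E_k$. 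The treatment of the buoyancy term, of $g^1_k,g^2_k$, and of \eqref{F0} matches the paper.

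There is, however, a genuine gap in your estimate of $f^2_k=\sum_l\wh{u}^2_l\wh{w}_{k-l}$, which enters \eqref{Ek} and is the entire content of \eqref{E0}. Proposition \ref{prop: w} prices this term at $\nu^{-1/2}\|f^2_k\|_{L^2L^2}$, so you need $\|f^2_k\|_{L^2L^2}\lesssim\sum_l E_lE_{k-l}$ with \emph{no} loss. Your stated pairing --- $\|\wh{u}^2_l\|_{L^\infty L^\infty}\lesssim|l|^{-1/2}E_l$ against $\|\wh{w}_{k-l}\|_{L^2L^2}\lesssim(\nu(k-l)^2)^{-1/4}E_{k-l}$ --- yields $\nu^{-3/4}$ overall instead of $\nu^{-1/2}$, and the alternative pairing $\|\wh{u}^2_l\|_{L^2L^\infty}\|\wh{w}_{k-l}\|_{L^\infty L^2}$ is unavailable because $E_{k-l}$ only controls the weighted quantity $\|(1-|y|)^{1/2}\wh{w}_{k-l}\|_{L^\infty L^2}$ (there is no boundary condition on the vorticity, so the unweighted $L^\infty L^2$ norm is not propagated). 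The paper's resolution --- and the reason the weight $(1-|y|)^{1/2}$ is built into $E_k$ at all --- is the Hardy-type bound: since $\wh{u}^2_l(\pm1)=0$, one has $\bigl\|\wh{u}^2_l/(1-|y|)^{1/2}\bigr\|_{L^2L^\infty}\le 2\|\pa_y\wh{u}^2_l\|_{L^2L^2}=2|l|\|\wh{u}^1_l\|_{L^2L^2}\le 2E_l$, matched against $\|(1-|y|)^{1/2}\wh{w}_{k-l}\|_{L^\infty L^2}\le E_{k-l}$; without this, \eqref{Ek} and \eqref{E0} do not close. A second, smaller gap: the coefficients $\nu^{-1/8}\mu^{-5/24}$ in \eqref{esti: F low}--\eqref{esti: F high} are asserted rather than derived. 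The derivation is not free bookkeeping: it hinges on using the frequency hypothesis to trade powers of $|k|$ for powers of $\mu$ (namely $|k|^{1/12}\le\mu^{-1/24}$ when $\mu k^2\le1$, and $|k|^{-7/12}\le\mu^{7/24}$, $|k|^{-1/3}\le\mu^{1/6}$ when $\mu k^2>1$), which your proposal never invokes; moreover, in the regime $|k-l|\le|k|/2$ the correct pairing is $\|\wh{u}^1_l\|_{L^2L^\infty}\|\wh{\theta}_{k-l}\|_{L^\infty L^2}$ (Gagliardo--Nirenberg on $\wh{u}^1_l$ alone), not a product of two $L^2_t$ enhanced-dissipation bounds with a H\"older split in time, which would land you in $L^1_t$ rather than the required $L^2_tL^2_y$.
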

\begin{proof}
\no\textsl{Proof of \eqref{Ek}.}\
Denoting $\wh{w}_k(t,y)=\frac{1}{2\pi}\int_{\bbT}\om(t,x,y)e^{-ikx}dx$ and 
\begin{align*}
f_k^1(t,y)&=\sum_{l\in\bbZ}\wh{u}_l^1(t,y)\wh{w}_{k-l}(t,y),\ f_k^2(t,y)=\sum_{l\in\bbZ}\wh{u}_l^2(t,y)\wh{w}_{k-l}(t,y),
\end{align*}
we have 
\begin{align}\label{eq: w non}
(\pa_t-\nu(\pa_y^2-k^2)+iky)\wh{w}_k(t,y)&=-ik\wh{\theta}_k(t,y)-ikf_k^1(t,y)-\pa_yf_k^2(t,y).
\end{align}
It follows from Proposition \ref{prop: w} that
\begin{align}\label{E}
E_k&\leq C\Big(\nu^{-\f14}|k|^{\f12}\|\wh{\theta}_k\|_{L^2L^2}+\nu^{-\f14}|k|^{\f12}\|f_k^1\|_{L^2L^2}+\nu^{-\f12}\|f_k^2\|_{L^2L^2}\Big)\nonumber\\
&\quad+\|\wh{w}_{in,k}\|_{L^2}+|k|^{-1}\|\pa_y\wh{w}_{in,k}\|_{L^2}.
\end{align}

As in \cite{CLWZ2020}, we get that for $k\neq0$,   
\begin{align*}
\left\|\f{\wh{u}_k^2(t,y)}{(1-|y|)^{\f12}}\right\|^2_{L^2L^{\infty}}&=\left\|\sup_{y\in[-1,1]}\f{|\wh{u}_k^2(t,y)|^2}{1-|y|}\right\|_{L^1}\\
&=\left\|\max\{\sup_{y\in[0,1]}\f{|\int_1^y\pa_z\wh{u}_k^2(t,z)dz|^2}{1-|y|},\sup_{y\in[-1,0]}\f{|\int_{-1}^y\pa_z\wh{u}_k^2(t,z)dz|^2}{1-|y|}\}\right\|_{L^1}\\
&\leq 4\|\pa_y\wh{u}_k^2\|_{L^2L^2}^2=4|k|^2\|\wh{u}_k^1\|_{L^2L^2}^2\leq 4E_k^2.
\end{align*}
From which, we infer that, for $k\in\bbZ$,
\begin{align}\label{esti: f2}
\|f_k^2\|_{L^2L^2}\leq\sum_{l\in\bbZ}\left\|\f{\wh{u}_l^2(t,y)}{(1-|y|)^{\f12}}\right\|_{L^2L^{\infty}}\|(1-|y|)^{\f12}\wh{w}_{k-l}\|_{L^{\infty}L^2}\leq 2\sum_{l\in\bbZ}E_lE_{k-l},
\end{align}
and
\begin{align*}
\|f_k^1\|_{L^2L^2}\leq\|\wh{u}_0^1\|_{L^{\infty}L^{\infty}}\|\wh{w}_k\|_{L^2L^2}+\|\wh{u}_k^1\|_{L^2L^{\infty}}\|\wh{w}_0\|_{L^{\infty}L^2}+\sum_{l\in\bbZ\setminus\{0,k\}}\|\wh{u}_l^1\|_{L^{\infty}L^{\infty}}\|\wh{w}_{k-l}\|_{L^2L^2}.
\end{align*}
Thanks to $|l||k-l|\gtrsim|k|(l\neq0,k)$, we have
\begin{align*}
\sum_{l\in\bbZ\setminus\{0,k\}}\|\wh{u}_l^1\|_{L^{\infty}L^{\infty}}\|\wh{w}_{k-l}\|_{L^2L^2}&\lesssim \sum_{l\in\bbZ\setminus\{0,k\}}|l|^{-\f12}E_l\nu^{-\f14}|k-l|^{-\f12}E_{k-l}\\
&\lesssim |k|^{-\f12}\nu^{-\f14}\sum_{l\in\bbZ\setminus\{0,k\}}E_lE_{k-l},
\end{align*}
and
\begin{align*}
\|\wh{u}_0^1\|_{L^{\infty}L^{\infty}}\|\wh{w}_k\|_{L^2L^2}+\|\wh{u}_k^1\|_{L^2L^{\infty}}\|\wh{w}_0\|_{L^{\infty}L^2}&\lesssim\|\wh{w}_0\|_{L^{\infty}L^2}\|\wh{w}_k\|_{L^2L^2}\lesssim (\nu k^2)^{-\f14}E_kE_0.
\end{align*}
This shows that 
\begin{align}\label{f1}
\|f_k^1\|_{L^2L^2}\lesssim (\nu k^2)^{-\f14}\sum_{l\in\bbZ}E_lE_{k-l}.
\end{align}

Thus, by \eqref{E}, \eqref{esti: f2} and \eqref{f1}, we obtain that 
\begin{align*}
E_k\leq\|\wh{w}_{in,k}\|_{L^2}+|k|^{-1}\|\pa_y\wh{w}_{in,k}\|_{L^2}+C\nu^{-\f12}\sum_{l\in\bbZ}E_lE_{k-l}+C\nu^{-\f14}\mu^{-\f16}H_k.
\end{align*}
\no\textsl{Proof of \eqref{E0}.}\ Due to $\text{div}\,u=0$, we have $\wh{u}_0^2(t,y)=0$. By $P_0(\wh{u}^1\pa_x\wh{u}^1)=0$, we have
\begin{align}\label{eq: ol u}
\pa_t\wh{u}_0^1(t,y)-\nu\pa_y^2\wh{u}_0^1(t,y)&=-\sum_{l\in\bbZ\setminus\{0\}}\wh{u}_l^2(t,y)\pa_y\wh{u}_{-l}^1(t,y)\nonumber\\
&=-\sum_{l\in\bbZ\setminus\{0\}}\wh{u}_l^2(t,y)\wh{w}_{-l}(t,y)=-f_0^2(t,y).
\end{align}
 By integration by parts in \eqref{eq: ol u}, we get
\begin{align*}
\langle(\pa_t-\nu\pa_y^2)\wh{u}_0^1,-\pa_y^2\wh{u}_0^1\rangle=\f12\pa_t\|\pa_y\wh{u}_0^1(t)\|_{L^2}^2+\nu\|\pa_y^2\wh{u}_0^1(t)\|_{L^2}^2=\langle f_0^2,\pa_y^2\wh{u}_0^1\rangle,
\end{align*}
which gives 
\begin{align*}
\pa_t\|\pa_y\wh{u}_0^1(t)\|_{L^2}^2+\nu\|\pa_y^2\wh{u}_0^1(t)\|_{L^2}^2\leq C\nu^{-1}\|f_0^2(t,y)\|_{L^2}^2,
\end{align*}
from which, along with $\pa_y\wh{u}_0^1(t,y)=\wh{w}_0(t,y)$, we infer that 
\begin{align}\label{eq: E0}
E_0^2=\|\wh{w}_0\|^2_{L^{\infty}L^2}\leq C \nu^{-1}\|f_0^2(t,y)\|_{L^2L^2}^2+\|\wh{w}_{in,0}\|_{L^2}^2.
\end{align}

Thus, by using \eqref{esti: f2}, we obtain
\begin{align*}
E_0\leq \|\wh{w}_{in,0}\|_{L^2}+C\nu^{-\f12}\sum_{l\in\bbZ\setminus\{0\}}E_lE_{-l}.
\end{align*}
\no\textsl{Proof of \eqref{F0}.}\  Similarly, we can derive the evolution equation of $\wh{\theta}_0$,
\begin{align}\label{eq: ol theta}
\pa_t\wh{\theta}_0-\mu\pa_y^2\wh{\theta}_0=-\sum_{l\in\bbZ\setminus\{0\}}\pa_y(u_l^2\wh{\theta}_{-l})(t,y)=-\pa_yg_0^2(t,y).
\end{align}

Similarly as the estimate of $E_0$, we get that by integration by parts in \eqref{eq: ol theta},
\begin{align}\label{eq:F0}
H_0^2=\|\wh{\theta}_0\|^2_{L^{\infty}L^2}+\mu\|\pa_y\wh{\theta}_0\|^2_{L^2L^2}\leq {C\mu^{-1}\|g_0^2(t,y)\|_{L^2L^2}^2}+\|\wh{\theta}_{in,0}\|_{L^2}^2.
\end{align}
By using the Gagliardo-Nirenberg inequality and $\pa_y\wh{u}_k^2=-ik\wh{u}_k^1$, we have
\begin{align}\label{G-N u2}
\|\wh{u}_k^2\|_{L^2L^{\infty}}\leq C|k|^{\f12}\|\wh{u}_k^2\|_{L^2L^2}^{\f12}\|\wh{u}_k^1\|_{L^2L^2}^{\f12}\leq C |k|^{-\f12}E_k.
\end{align}
And then, we obtain
\begin{align}\label{esti: g0}
\|g_0^2\|_{L^2L^2}&\leq\sum_{l\in\bbZ\setminus\{0\}}\|\wh{u}_l^2\|_{L^2L^{\infty}}\|\wh{\theta}_{-l}\|_{L^{\infty}L^2}\lesssim\sum_{l\in\bbZ\setminus\{0\}}|l|^{-\f12}|-l|^{-\f16}E_lH_{-l}\nonumber\\
&\lesssim\sum_{l\in\bbZ\setminus\{0\}}|l|^{-\f23}E_lH_{-l}.
\end{align}
Thus, from  \eqref{eq:F0} and \eqref{esti: g0},  we have
\begin{align*}
H_0\lesssim\|\wh{\theta}_{in,0}\|_{L^2}+\mu^{-\f12}\sum_{l\in\bbZ\setminus\{0\}}|l|^{-\f23}E_lH_{-l}.
\end{align*}

In order to control the nonlinear term $\|g_k^1\|_{L^2L^2}$, during the estimates of $H_k$, we need to divide them into the low frequency part $\mu k^2\leq 1$ and the high frequency part $\mu k^2>1$.

\no\textsl{Proof of \eqref{esti: F low}.}\ First, we can derive the evolution equations of $\wh{\theta}_k(t,y)=\frac{1}{2\pi}\int_{\bbT}\theta(t,x,y)e^{-ikx}dx$. Denoting 
\begin{align*}
g_k^1(t,y)&=\sum_{l\in\bbZ}\wh{u}_l^1(t,y)\wh{\theta}_{k-l}(t,y),\ \ g_k^2(t,y)=\sum_{l\in\bbZ}\wh{u}_l^2(t,y)\wh{\theta}_{k-l}(t,y),
\end{align*}
we have that $\wh{\theta}_k(t,y)$ satisfies,
\begin{align}
(\pa_t-\mu(\pa_y^2-k^2)+iky)\wh{\theta}_k(t,y)=-ikg_k^1(t,y)-\pa_yg_k^2(t,y).
\end{align}
For $\mu k^2\leq 1$, it follows from Proposition \ref{prop: theta} that
\begin{align}\label{F}
H_k \leq|k|^{\f16} \|\wh{\theta}_{in,k}\|_{L^2}+C\Big(\mu^{-\f16}|k|^{\f56}\|g_k^1\|_{L^2L^2}+\mu^{-\f12}|k|^{\f16}\|g_k^2\|_{L^2L^2}\Big).
\end{align}

On the one hand, by using $\wh{u}_0^2=0$ and \eqref{G-N u2}, we have that for $k\neq0$,
\begin{align}\label{esti: F g2}
\|g_k^2\|_{L^2L^2}&\leq\|\wh{u}_k^2\|_{L^2L^{\infty}}\|\wh{\theta}_0\|_{L^{\infty}L^2}+\sum_{l\in\bbZ\setminus\{0,k\}}\|\wh{u}_l^2\|_{L^2L^{\infty}}\|\wh{\theta}_{k-l}\|_{L^{\infty}L^2}\nonumber\\
&\leq |k|^{-\f12}E_kH_0+\sum_{l\in\bbZ\setminus\{0,k\}}|l|^{-\f12}|k-l|^{-\f16}E_lH_{k-l}\nonumber\\
&\leq |k|^{-\f12}E_kH_0+|k|^{-\f16}\sum_{l\in\bbZ\setminus\{0,k\}}E_lH_{k-l}.\end{align}

On the other hand, for $g_k^1$ and $k\neq0$, by using Gagliardo-Nirenberg inequality, we have
\begin{align}\label{11}
\|\wh{u}_k^1\|_{L^2L^{\infty}}\leq C\|\wh{u}_k^1\|_{L^2L^2}^{\f12}\|\pa_y\wh{u}_k^1\|_{L^2L^2}^{\f12}\leq C\nu^{-\f18}|k|^{-\f34}E_k,
\end{align}
 and then we obtain that 
\begin{align}\label{22}
\|g_k^1\|_{L^2L^2}&\leq\|\wh{u}_0^1\|_{L^{\infty}L^{\infty}}\|\wh{\theta}_k\|_{L^2L^2}+\|\wh{u}_k^1\|_{L^2L^{\infty}}\|\wh{\theta}_0\|_{L^{\infty}L^2}+\sum_{l\in\bbZ\setminus\{0,k\}}\|\wh{u}_l^1\wh{\theta}_{k-l}\|_{L^2L^2}\nonumber\\
&\leq\|\wh{w}_0\|_{L^{\infty}L^2}\|\wh{\theta}_k\|_{L^2L^2}+\nu^{-\f18}|k|^{-\f34}E_k\|\wh{\theta}_0\|_{L^{\infty}L^2}+\sum_{l\in\bbZ\setminus\{0,k\}}\|\wh{u}_l^1\wh{\theta}_{k-l}\|_{L^2L^2}\nonumber\\
&\leq \mu^{-\f16}|k|^{-\f12}E_0H_k+\nu^{-\f18}|k|^{-\f34}E_kH_0+\sum_{l\in\bbZ\setminus\{0,k\}}\|\wh{u}_l^1\wh{\theta}_{k-l}\|_{L^2L^2}.
\end{align} 
To estimate $\sum\limits_{l\in\bbZ\setminus\{0,k\}}\|\wh{u}_l^1\wh{\theta}_{k-l}\|_{L^2L^2}$, we divide it into two parts and get that
\begin{align*}
\sum_{l\in\bbZ\setminus\{0,k\}}\|\wh{u}_l^1\wh{\theta}_{k-l}\|_{L^2L^2}&\leq\sum_{l\in\bbZ\setminus\{0,k\},|k-l|\leq\f{|k|}{2}}\|\wh{u}_l^1\wh{\theta}_{k-l}\|_{L^2L^2}+\sum_{l\in\bbZ\setminus\{0,k\},|k-l|>\f{|k|}{2}}\|u_l^1\wh{\theta}_{k-l}\|_{L^2L^2}\nonumber\\
&\eqdef\text{HL}+\text{LH}, 
\end{align*}
whereas by \eqref{11}, 
\begin{align}\label{HL}
\text{HL}&\leq\sum_{l\in\bbZ\setminus\{0,k\},|k-l|\leq\f{|k|}{2}}\|\wh{u}_l^1\|_{L^2L^{\infty}}\|\wh{\theta}_{k-l}\|_{L^{\infty}L^2}\nonumber\\
&\lesssim \sum_{l\in\bbZ\setminus\{0,k\},|k-l|\leq\f{|k|}{2}}\nu^{-\f18}|l|^{-\f34}|k-l|^{-\f16}E_lH_{k-l}\nonumber\\
&\lesssim \nu^{-\f18}|k|^{-\f34}\sum_{l\in\bbZ\setminus\{0,k\},|k-l|\leq\f{|k|}{2}}E_lH_{k-l},
\end{align}
and 
\begin{align}\label{LH}
\text{LH}&\leq\sum_{l\in\bbZ\setminus\{0,k\},|k-l|>\f{|k|}{2}}\|\wh{u}_l^1\|_{L^{\infty}L^{\infty}}\|\wh{\theta}_{k-l}\|_{L^2L^2}\nonumber\\
&\lesssim\sum_{l\in\bbZ\setminus\{0,k\},|k-l|>\f{|k|}{2}}|l|^{-\f12}|k-l|^{-\f12}\mu^{-\f16}E_lH_{k-l}\nonumber\\
&\lesssim\mu^{-\f16}|k|^{-\f12}\sum_{l\in\bbZ\setminus\{0,k\},|k-l|>\f{|k|}{2}}E_lH_{k-l}.
\end{align}
And then, substituting \eqref{HL} and \eqref{LH} into \eqref{22}, we get
\begin{align}\label{esti: F g1}
\|g_k^1\|_{L^2L^2}&\lesssim \mu^{-\f16}|k|^{-\f12}E_0H_k+\nu^{-\f18}|k|^{-\f34}E_kH_0\nonumber\\
&\quad+\nu^{-\f18}|k|^{-\f34}\sum_{l\in\bbZ\setminus\{0,k\},|k-l|\leq\f{|k|}{2}}E_lH_{k-l}+\mu^{-\f16}|k|^{-\f12}\sum_{l\in\bbZ\setminus\{0,k\},|k-l|>\f{|k|}{2}}E_lH_{k-l}.
\end{align}
Thus, combining \eqref{F}, \eqref{esti: F g2} and \eqref{esti: F g1}, we get that for $k\neq0$ and $\mu k^2 \leq1$,
\begin{align*}
H_k\lesssim& |k|^{\f16}\|\wh{\theta}_{in,k}\|_{L^2}+\mu^{-\f12}\sum_{l\in\bbZ\setminus\{0\}}E_lH_{k-l}+\mu^{-\f13}|k|^{\f13}E_0H_k+\nu^{-\f18}\mu^{-\f16}|k|^{\f{1}{12}}E_kH_0\nonumber\\
&\quad+\nu^{-\f18}\mu^{-\f16}|k|^{\f{1}{12}}\sum_{l\in\bbZ\setminus\{0,k\},|k-l|\leq\f{|k|}{2}}E_lH_{k-l}+\mu^{-\f13}|k|^{\f13}\sum_{l\in\bbZ\setminus\{0,k\},|k-l|>\f{|k|}{2}}E_lH_{k-l}\nonumber\\
&\lesssim|k|^{\f16} \|\wh{\theta}_{in,k}\|_{L^2}+\mu^{-\f12}\sum_{l\in\bbZ}E_lH_{k-l}+\nu^{-\f18}\mu^{-\f{5}{24}}E_kH_0\nonumber\\
&\quad +\nu^{-\f18}\mu^{-\f{5}{24}}\sum_{l\in\bbZ\setminus\{0,k\},|k-l|\leq\f{|k|}{2}}E_lH_{k-l}+\mu^{-\f12}\sum_{l\in\bbZ\setminus\{0,k\},|k-l|>\f{|k|}{2}}E_lH_{k-l}\nonumber\\
&\lesssim|k|^{\f16}\|\wh{\theta}_{in,k}\|_{L^2}+\mu^{-\f12}\sum_{l\in\bbZ}E_lH_{k-l}+\nu^{-\f18}\mu^{-\f{5}{24}}\sum_{l\in\bbZ\setminus\{0,k\},|k-l|\leq\f{|k|}{2}}E_lH_{k-l}.
\end{align*}
\no\textsl{Proof of \eqref{esti: F high}.} For  $\mu k^2>1$, it follows from Proposition \ref{prop: theta energy esti} that
\begin{align}\label{F high fre}
H_k &\leq |k|^{\f16}\|\wh{\theta}_k\|_{L^{\infty}L^2}+|k|^{\f16}(\mu k^2)^{\f12}\|\wh{\theta}_k\|_{L^2L^2}\nonumber\\
&\leq |k|^{\f16}\|\wh{\theta}_{in,k}\|_{L^2}+C\mu^{-\f12}|k|^{\f16}\big(\|g_k^1\|_{L^2L^2}+\|g_k^2\|_{L^2L^2}\big).
\end{align}

For $g_k^1$ and $k\neq0$,  by \eqref{11}, we obtain
\begin{align}\label{44}
\|g_k^1\|_{L^2L^2}&\leq\|\wh{u}_0^1\|_{L^{\infty}L^{\infty}}\|\wh{\theta}_k\|_{L^2L^2}+\|\wh{u}_k^1\|_{L^2L^{\infty}}\|\wh{\theta}_0\|_{L^{\infty}L^2}+\sum_{l\in\bbZ\setminus\{0,k\}}\|\wh{u}_l^1\wh{\theta}_{k-l}\|_{L^2L^2}\nonumber\\
&\lesssim \|\wh{w}_0\|_{L^{\infty}L^2}\|\wh{\theta}_k\|_{L^2L^2}+\nu^{-\f18}|k|^{-\f34}E_k\|\wh{\theta}_0\|_{L^{\infty}L^2}+\sum_{l\in\bbZ\setminus\{0,k\}}\|\wh{u}_l^1\wh{\theta}_{k-l}\|_{L^2L^2}\nonumber\\
&\lesssim\mu^{-\f16}|k|^{-\f12}E_0H_k+\nu^{-\f18}|k|^{-\f34}E_kH_0+\sum_{l\in\bbZ\setminus\{0,k\}}\|\wh{u}_l^1\wh{\theta}_{k-l}\|_{L^2L^2}.
\end{align} 
Whereas for the term $\sum_{l\in\bbZ\setminus\{0,k\}}\|\wh{u}_l^1\wh{\theta}_{k-l}\|_{L^2L^2}$, we can obtain that by using $|l||k-l|\gtrsim|k| (l\neq 0,k)$,
\begin{align*}
\sum_{l\in\bbZ\setminus\{0,k\}}\|\wh{u}_l^1\wh{\theta}_{k-l}\|_{L^2L^2}&\lesssim \sum_{l\in\bbZ\setminus\{0,k\}}\|\wh{u}_l^1\|_{L^{\infty}L^{\infty}}\|\wh{\theta}_{k-l}\|_{L^2L^2}\\
&\lesssim \sum_{l\in\bbZ\setminus\{0,k\}}|l|^{-\f12}E_l \mu^{-\f16}|k-l|^{-\f12}H_{k-l}\\
&\lesssim\mu^{-\f16}|k|^{-\f12}\sum_{l\in\bbZ\setminus\{0,k\}}E_lH_{k-l}.
\end{align*}
And then, we obtain
\begin{align}\label{esti: F g1 high}
\|g_k^1\|_{L^2L^2}&\lesssim \mu^{-\f16}|k|^{-\f12}E_0H_k+\nu^{-\f18}|k|^{-\f34}E_kH_0+\mu^{-\f16}|k|^{-\f12}\sum_{l\in\bbZ\setminus\{0,k\}}E_lH_{k-l}.
\end{align}

Thus, combining \eqref{F high fre}, \eqref{esti: F g2} and \eqref{esti: F g1 high}, we get that for $k\neq0$ and $\mu k^2>1$,
\begin{align*}
H_k&\lesssim |k|^{\f16}\|\wh{\theta}_{in,k}\|_{L^2}+\mu^{-\f12}\sum_{l\in\bbZ}E_lH_{k-l}+\mu^{-\f23}|k|^{-\f13}E_0H_k+\nu^{-\f18}\mu^{-\f12}|k|^{-\f{7}{12}}E_kH_0\nonumber\\
&\quad+\mu^{-\f23}|k|^{-\f13}\sum_{l\in\bbZ\setminus\{0,k\}}E_lH_{k-l}\nonumber\\
&\lesssim |k|^{\f16}\|\wh{\theta}_{in,k}\|_{L^2}+\mu^{-\f12}\sum_{l\in\bbZ}E_lH_{k-l}+\mu^{-\f12}E_0H_k+\nu^{-\f18}\mu^{-\f{5}{24}}E_kH_0+\mu^{-\f12}\sum_{l\in\bbZ\setminus\{0,k\}}E_lH_{k-l}\nonumber\\
&\lesssim |k|^{\f16}\|\wh{\theta}_{in,k}\|_{L^2}+\mu^{-\f12}\sum_{l\in\bbZ}E_lH_{k-l}+\nu^{-\f18}\mu^{-\f{5}{24}}E_kH_0.
\end{align*}

This completes the proof of Proposition \ref{prop: EFG}.
\end{proof}

Now we prove Theorem \ref{main thm}. From \eqref{E0} and \eqref{Ek}, we deduce
\begin{align}\label{esti: E sum}
\sum_{k\in\bbZ}E_k&\leq\sum_{k\in\bbZ}\|\wh{w}_{in,k}\|_{L^2}+\sum_{k\in\bbZ\setminus\{0\}}|k|^{-1}\|\pa_y\wh{w}_{in,k}\|_{L^2}\nonumber\\
&\quad +C\nu^{-\f12}\sum_{k\in\bbZ}\sum_{l\in\bbZ}E_lE_{k-l}+C\nu^{-\f14}\mu^{-\f16}\sum_{k\in\bbZ\setminus\{0\}}H_k.
\end{align}
And by the fact that 
$$\sum_{k\in\bbZ}H_k=H_0+\sum_{k\in\bbZ\setminus\{0\}, \mu k^2\leq1}H_k+\sum_{k\in\bbZ\setminus\{0\}, \mu k^2>1}H_k,
$$  
combining \eqref{F0}, \eqref{esti: F low} and \eqref{esti: F high}, we can deduce that 
\begin{align}\label{esti: F sum}
\sum_{k\in\bbZ}H_k&\lesssim\|\wh{\theta}_{in,0}\|_{L^2}+\sum_{k\in\bbZ\setminus\{0\}}|k|^{\f16}\|\wh{\theta}_{in,k}\|_{L^2}+\mu^{-\f12}\sum_{k\in\bbZ}\sum_{l\in\bbZ}E_lH_{k-l}\nonumber\\
&\quad+\nu^{-\f18}\mu^{-\f{5}{24}}\sum_{k\in\bbZ\setminus\{0\}, \mu k^2\leq1}\sum_{l\in\bbZ}E_kH_{k-l}+\nu^{-\f18}\mu^{-\f{5}{24}}\sum_{k\in\bbZ\setminus\{0\}, \mu k^2>1}E_kH_0.
\end{align}

 On the other hand, it is easy to verify that from $\|u_{in}\|_{H^2}\leq\e_0\min\{\nu,\mu\}^{\f12}$ and $\|\theta_{in}\|_{H^1}+\||D_x|^{\f16}\theta_{in}\|_{H^1}\leq \e_1\min\{\nu,\mu\}^{\f{11}{12}}$,
\begin{align*}
\sum_{k\in\bbZ}\|\wh{w}_{in,k}\|_{L^2}+\sum_{k\in\bbZ\setminus\{0\}}|k|^{-1}\|\pa_y\wh{w}_{in,k}\|_{L^2}\leq C\e_0\min\{\nu,\mu\}^{\f12},
\end{align*}
and 
\begin{align*}
 \|\wh{\theta}_{in,0}\|_{L^2}+\sum_{k\in\bbZ\setminus\{0\}}|k|^{\f16}\|\wh{\theta}_{in,k}\|_{L^2}\leq C\e_1\min\{\nu,\mu\}^{\f{11}{12}}.
\end{align*}

Thus, for  $\e_0,\e_1$ suitably small,  by bootstrap arguments, we can deduce from \eqref{esti: E sum} and \eqref{esti: F sum} that
\begin{align*}
\sum_{k\in\bbZ}E_k\leq C\e_0\min\{\nu,\mu\}^{\f12},\quad  \sum_{k\in\bbZ}H_k\leq C\e_1\min\{\nu,\mu\}^{\f{11}{12}}.
\end{align*}

This completes the proof of Theorem \ref{main thm}. \qed

\section*{Acknowledgements}
The work of N. Masmoudi is supported by NSF grant DMS-1716466 and by Tamkeen under the NYU Abu Dhabi Research Institute grant of the center SITE. C. Zhai's work is supported by a grant from the China Scholarship Council and this work was done when C. Zhai was visiting the center SITE, NYU Abu Dhabi. She appreciates the hospitality from NYU.

\end{CJK*}
\end{document}